\newtheorem{thm}{Theorem}[section]
\newtheorem{crl}[thm]{Corollary}
\newtheorem{lmm}[thm]{Lemma}
\newtheorem{prp}[thm]{Proposition}
\theoremstyle{definition}
\newtheorem{dfn}[thm]{Definition}
\theoremstyle{Remark}
\newtheorem{rmk}[thm]{Remark}
\newtheorem{exa}[thm]{Example}
\theoremstyle{remark}
\newcommand{\Z}{\mathbb Z}
\newcommand{\FF}{\mathbb F}
\newcommand{\oFp}{\overline{\FF}_p}
\newcommand{\HH}{\mathbb{H}}
\newcommand{\msH}{\mathscr{H}}
\newcommand{\msA}{\mathscr{A}}
\newcommand{\msL}{\mathscr{L}}
\newcommand{\msP}{\mathscr{P}}
\newcommand{\cO}{{\mathcal O}}
\newcommand{\Q}{{\mathbb Q}}
\newcommand{\N}{{\mathbb N}}
\newcommand{\A}{\msA}
\newcommand{\Gr}{\mathit{Gr}}
\newcommand{\wgr}{\widetilde{\mathit{gr}}}
\newcommand{\gr}{\mathit{gr}}
\newcommand{\cB}{\mathcal{B}}
\newcommand{\cSS}{\mathcal{S\hspace*{-1pt}S}}
\newcommand{\GR}{\mathit{Gr}}
\DeclareMathOperator\Ad{Ad}
\DeclareMathOperator\Gal{Gal}
\DeclareMathOperator\w{w}
\DeclareMathOperator\Adw{\Ad_w}
\DeclareMathOperator\iso{iso}
\DeclareMathOperator\Aut{Aut}
\DeclareMathOperator\sS{\sf SP}
\DeclareMathOperator\Pic{Pic}
\newcommand\PGL{{\rm PGL}}
\DeclareMathOperator\End{End}
\newcommand\SL{{\rm SL}}
\newcommand\Sp{{\rm Sp}}
\newcommand\GL{{\rm GL}}
\renewcommand\U{{\rm U}}
\DeclareMathOperator\Iso{Iso}
\DeclareMathOperator\Disc{Disc}
\DeclareMathOperator\Frobb{Frob}
\DeclareMathOperator\Id{Id}
\newcommand\GU{{\rm GU}}
\DeclareMathOperator\Nm{Nm}
\DeclareMathOperator\HNm{HNm}
\DeclareMathOperator\Grr{Gr}
\DeclareMathOperator\Ver{Ver}
\DeclareMathOperator\Ed{Ed}
\DeclareMathOperator\Mat{Mat}
\newcommand{\sSgpz}{\sS_{\!g}(p)_0}
\newcommand{\sSgpg}{\sS_{\!g}(p)_g}
\begin{document}
\title{Isogeny graphs of superspecial abelian varieties}
\subjclass[2010]{Primary 14K02; Secondary 11G10, 14G15}
\keywords{superspecial, abelian varieties, isogeny graphs, Brandt matrices,
quaternionic unitary group}

\author{Bruce~W.~Jordan}
\address{Department of Mathematics, Baruch College, The City University
of New York, One Bernard Baruch Way, New York, NY 10010-5526, USA}
\email{bruce.jordan@baruch.cuny.edu}

\maketitle

\begin{abstract}      
We define three different isogeny graphs of principally polarized
superspecial abelian varieties, prove foundational results on them,
and explain their role 
in number theory and geometry.  This is background  to
joint work with Yevgeny Zaytman on properties of these isogeny graphs
for dimension $g>1$,
especially
the result that they are connected, but not in general Ramanujan.
\end{abstract}

\section{Introduction}

A superspecial abelian variety $A/\oFp$ of dimension $g$
is by definition isomorphic
to a product of $g$ supersingular elliptic curves.  There is in fact only 
{\em one} superspecial abelian variety of dimension $g>1$:  
Fix a supersingular elliptic curve $E/\oFp$ with 
$\cO=\cO_E=\End(E)$ a maximal order in the rational definite quaternion
algebra $\HH_p$ ramified at $p$.
\begin{thm}
\label{DOS}
{\rm (Deligne, Ogus \cite{og}, Shioda \cite{shi}) } Suppose $A/\oFp$ is a superspecial abelian
variety with $\dim A=g>1$.  Then $A\cong E^g$.
\end{thm}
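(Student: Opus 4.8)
The plan is to convert the statement into an assertion about locally free modules over $\cO=\End(E)$ and then to quote Eichler's theorem on ideal classes, which becomes available exactly because $g>1$ (this is, in essence, Deligne's argument). By the definition of superspeciality we may write $A\cong E_1\times\cdots\times E_g$ with each $E_i/\oFp$ supersingular. First I would put $M:=\operatorname{Hom}(E,A)=\bigoplus_{i=1}^{g}\operatorname{Hom}(E,E_i)$, a right $\cO$-module via precomposition, and check that each summand $\operatorname{Hom}(E,E_i)$ --- hence $M$ --- is locally free of the expected rank: at a prime $\ell\ne p$ one has $\cO\otimes\Z_\ell\cong M_2(\Z_\ell)$ and the isogeny theorem identifies $\operatorname{Hom}(E,E_i)\otimes\Z_\ell$ with a rank-$1$ locally free module over it, while at $p$, Dieudonné theory together with the uniqueness up to isomorphism (not merely isogeny) of the supersingular $p$-divisible group of height $2$ gives $\operatorname{Hom}(E,E_i)\otimes\Z_p\cong\End(E[p^\infty])\cong\cO\otimes\Z_p$, free of rank $1$ since $\cO\otimes\Z_p$ is a maximal order in a $p$-adic division algebra. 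Thus $M$ lies in the genus of $\cO^{g}$. Conversely, the Serre tensor construction furnishes a morphism $\operatorname{ev}\colon M\otimes_{\cO}E\to A$ of $g$-dimensional abelian varieties which one checks to be an isogeny; if we knew $M\cong\cO^{g}$ then its source would be $E^{g}$ and $\operatorname{ev}$ would be an isomorphism --- verified on $\ell$-adic Tate modules and on the Dieudonné module at $p$, where $\operatorname{ev}$ is the canonical isomorphism of Morita theory over $M_2(\Z_\ell)$, respectively over $\cO\otimes\Z_p$. So the theorem is reduced to the purely algebraic claim: \emph{every right $\cO$-module that is locally free of rank $g\ge2$ is free.}

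For the algebraic claim I would pass through Morita equivalence: $N\mapsto\operatorname{Hom}_{\cO}(\cO^{g},N)$ matches locally free rank-$g$ right $\cO$-modules with locally free rank-$1$ right $\Lambda$-modules, where $\Lambda:=M_g(\cO)$ is a maximal order in $B:=M_g(\HH_p)$. Since $g\ge2$ we have $\dim_{\Q}B=4g^{2}\ge16$, so $B$ is not a quaternion algebra and in particular not a totally definite one; hence $B$ satisfies the Eichler condition. Eichler's theorem then identifies, via the reduced norm, the isomorphism classes of locally free right $\Lambda$-ideals with a ray class group of the center $\Q$ (with modulus supported at the ramified place $\infty$), and since the narrow class number of $\Q$ is $1$ this set consists of a single class. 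Therefore every locally free rank-$g$ right $\cO$-module is isomorphic to $\cO^{g}$, and combined with the reduction above this yields $A\cong E^{g}$.

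The main obstacle is this final module-theoretic step, and it is precisely here that the hypothesis $g>1$ is indispensable: when $g=1$ the algebra $\HH_p$ itself \emph{is} a totally definite quaternion algebra, the Eichler condition fails, and the class number $h(\cO)$ --- which counts exactly the isomorphism classes of supersingular elliptic curves, of size about $p/12$ --- is in general larger than $1$, so $A$ need not be isomorphic to $E$. Passing from $\cO$ to $M_g(\cO)$ is the device that restores the Eichler condition and collapses the ideal class set to a point. A secondary technical point demanding care is the promotion, in the first step, of the Serre-tensor isogeny to a genuine isomorphism of abelian varieties: this rests on the local computation at $p$ via Dieudonné modules and on the uniqueness up to isomorphism of the height-$2$ supersingular $p$-divisible group.
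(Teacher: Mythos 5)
The paper does not prove this statement; it quotes it from Ogus and Shioda, and the proof on record there (due to Deligne) is essentially the one you give: identify $\operatorname{Hom}(E,A)$ as a locally free rank-$g$ right $\cO$-module, recover $A$ from it by the Serre tensor construction, and invoke Eichler's theorem for the maximal order $\Mat_{g\times g}(\cO)$, which satisfies the Eichler condition precisely because $g>1$. Your argument is correct, including the two points that genuinely need care --- local freeness at $p$ via the uniqueness up to isomorphism of the supersingular $p$-divisible group of height $2$, and the collapse of the locally free class set to a point because the narrow class number of $\Q$ is $1$.
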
 
If $A=E^g$, then 
\[
\End(A)=\Mat_{g\times g}(\cO)\subseteq
\End^{0}(A)=\End(A)\otimes \Q=\Mat_{g\times g}(\HH_p).
\]
The theory of superspecial abelian varieties thus bifurcates:
for dimension $g=1$ there are {\em many} superspecial abelian varieties
(= supersingular elliptic curves) each with {\em one} principal polarization,
whereas for dimension $g>1$ there is {\em one} superspecial abelian variety
with {\em many} principal polarizations.

Let $\msA=(A=E^g,\lambda)$ be a principally polarized superspecial abelian variety of
dimension $g$ over $\oFp$ with $\oFp$-isomorphism class $[\msA]$.
The principal polarization $\lambda$ is an isomorphism from $A$ to
$\hat{A}={\rm Pic}^0(A)$ satisfying the conditions of Definition \ref{radio}.
The number of such isomorphism classes $[\msA]$ is finite and denoted
$h=h_g(p)$; we shall see that $h$ is a type of class number.
Set
\begin{align}
\label{rent}
\sSgpz&=\{\text{$\oFp$-isomorphism classes $[\msA]$}\}\\
\nonumber &=\{[\msA_1],\ldots , [\msA_h]\}\text{ with } \msA_j=(A_j,\lambda_j).
\end{align}
So, for example, 
\begin{align*}
\sS_{\!1}(p)_0&=\{\text{supersingular $j$-invariants in 
characteristic $p$}\}\text{ and}\\
\#\sS_{\!1}(p)_0&=h_1(p)=h(\HH_p), \text {
the class number of the quaternion algebra $\HH_p$.}
\end{align*}

A principal polarization $\lambda$ of an abelian variety $A/\oFp$ defines
the Weil pairing on $A[\ell]$, $\ell\neq p$ prime:
$\langle \,\,\, ,\,\,\,\rangle_{\lambda,\ell}:A[\ell]\times A[\ell]\rightarrow
\mu_\ell$. Put 
\begin{equation}
\label{whole}
\Iso_\ell(\msA)=\{\text{maximal isotropic subgroups 
$C\subseteq A[\ell]$}\};\,\,\,
 \#\Iso_{\ell}(\msA)=\prod_{k=1}^g(\ell^k+1).
\end{equation}

\begin{prp}
{\rm (cf. \cite[\S 23]{m}, \cite[p.~36]{o}).}\,\,
Suppose $\ell\neq p$, $\msA=(A,\lambda)$ is a principally polarized abelian 
variety over $\oFp$, and $C\subseteq A[\ell]$.  Let 
$\psi_C:A\rightarrow A/C\equalscolon A'$.  Then there is a principal
polarization $\lambda'$ on $A'$ so that $\psi_C^\ast \lambda'=\ell \lambda$ 
if and only if $C\in\Iso_\ell(\msA)$.
In this case write $\msA'=(A',\lambda')=\msA/C$.  If $[\msA]\in
\sSgpz$, then $[\msA']\in\sSgpz$.
\end{prp}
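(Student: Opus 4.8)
The plan is to represent the principal polarization as $\lambda=\phi_L$ for an ample line bundle $L$ on $A$ with $\deg\phi_L=1$ (possible by the definition of a principal polarization, Definition~\ref{radio}) and to deduce the statement from Mumford's descent theory for line bundles along separable isogenies \cite[\S23]{m}; here $\psi_C^\ast$ is the usual pullback, so $\psi_C^\ast\lambda'=\hat\psi_C\circ\lambda'\circ\psi_C$ and $\psi_C^\ast\phi_N=\phi_{\psi_C^\ast N}$ for a line bundle $N$ on $A'$. Since $\lambda$ is an isomorphism, $K(L^{\otimes\ell}):=\ker\phi_{L^{\otimes\ell}}=\ker(\ell\lambda)=A[\ell]$, and up to sign the commutator pairing of the theta group $1\to\mathbb{G}_m\to\mathcal{G}(L^{\otimes\ell})\to A[\ell]\to1$ is the Weil pairing $\langle\,,\,\rangle_{\lambda,\ell}$ on $A[\ell]$. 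Because $\ell\neq p$, the subgroup $C\subseteq A[\ell]$ is finite étale of order prime to $p$, so $\psi_C$ is separable and over the algebraically closed field $\oFp$ every commutative extension of $C$ by $\mathbb{G}_m$ splits; hence $C$ lifts to a level subgroup of $\mathcal{G}(L^{\otimes\ell})$ if and only if $\langle\,,\,\rangle_{\lambda,\ell}$ vanishes on $C\times C$, i.e.\ iff $C$ is isotropic, and exactly then does $L^{\otimes\ell}$ descend along $\psi_C$: $L^{\otimes\ell}\cong\psi_C^\ast M$ for some line bundle $M$ on $A'$.

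Given such an $M$, set $\lambda'=\phi_M$; then $\psi_C^\ast\lambda'=\phi_{\psi_C^\ast M}=\phi_{L^{\otimes\ell}}=\ell\lambda$, and $M$ is ample since its pullback $L^{\otimes\ell}$ is ample and $\psi_C$ is finite, so $\lambda'$ is a polarization. Comparing degrees, $(\#C)^2\deg\phi_M=\deg(\ell\lambda)=\ell^{2g}$, so $\lambda'$ is principal exactly when $\#C=\ell^g$, and a subgroup of the nondegenerate symplectic $\mathbb{F}_\ell$-space $(A[\ell],\langle\,,\,\rangle_{\lambda,\ell})$ is maximal isotropic exactly when it is isotropic of order $\ell^g$. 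Thus $C\in\Iso_\ell(\msA)$ yields a principal $\lambda'$ with $\psi_C^\ast\lambda'=\ell\lambda$. Conversely, if a principal polarization $\lambda'=\phi_M$ satisfies $\psi_C^\ast\lambda'=\ell\lambda$, then $\phi_{\psi_C^\ast M}=\phi_{L^{\otimes\ell}}$, so $L^{\otimes\ell}$ and $\psi_C^\ast M$ differ by a point of $\hat A(\oFp)$, which --- $\hat\psi_C$ being surjective on $\oFp$-points --- is of the form $\psi_C^\ast\beta$ with $\beta\in\hat{A'}(\oFp)$; absorbing $\beta$ into $M$ gives $L^{\otimes\ell}\cong\psi_C^\ast M$, so $L^{\otimes\ell}$ descends, forcing $C$ isotropic, while $\deg\phi_M=1$ forces $\#C=\ell^g$, so $C\in\Iso_\ell(\msA)$. (Alternatively one may take $\lambda'=\tfrac1\ell\,\widehat{\psi'}\circ\lambda\circ\psi'$, where $\psi'\colon A'\to A$ is the isogeny with $\psi'\circ\psi_C=[\ell]$, and reduce its well-definedness to the same isotropy of $C$.)

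For the final assertion, let $[\msA]\in\sSgpz$ and $C\in\Iso_\ell(\msA)$. By the above $\lambda'$ is a principal polarization on $A'=A/C$, so it suffices to check that $A'$ is superspecial. But $\#C=\ell^g$ is prime to $p$, so $\psi_C$ is an étale isogeny and induces an isomorphism of $p$-divisible groups $A[p^\infty]\xrightarrow{\ \sim\ }A'[p^\infty]$; since superspeciality of an abelian variety over $\oFp$ depends only on its $p$-divisible group (equivalently, on the $a$-number being equal to the dimension), $A'$ is superspecial, hence $A'\cong E^g$ for $g>1$ by Theorem~\ref{DOS} (and $A'$ is a supersingular elliptic curve for $g=1$). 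Therefore $[\msA']=[(A',\lambda')]\in\sSgpz$.

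The main obstacle is the bookkeeping in the descent step: identifying the theta-group commutator pairing of $L^{\otimes\ell}$ with $\langle\,,\,\rangle_{\lambda,\ell}$ under a consistent sign convention, checking over $\oFp$ with $\ell\neq p$ that the level-subgroup obstruction is exactly the isotropy of $C$, and running the resulting equivalence in both directions --- the forward implication being a construction, the converse the recognition of when a polarization on $A'$ pulls back to $\ell\lambda$.
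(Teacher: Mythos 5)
Your proof is correct. The paper offers no proof of this Proposition beyond the citations to Mumford \cite[\S 23]{m} and Oort \cite[p.~36]{o}, and your argument --- writing $\lambda=\phi_L$, identifying the commutator pairing of the theta group of $L^{\otimes\ell}$ with the Weil pairing, characterizing descent of $L^{\otimes\ell}$ along the separable isogeny $\psi_C$ by the existence of a level subgroup over $C$, and matching degrees to pin down $\#C=\ell^g$, plus the prime-to-$p$ isogeny argument for preservation of superspeciality --- is precisely the standard argument those references supply, so it is essentially the same approach as the paper's.
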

\noindent If $[\msA]\in\sSgpz$ and $C\in\Iso_\ell(\msA)$, then
$\msA\rightarrow \msA'=\msA/C$ is an $(\ell)^g$-isogeny.  Such 
$(\ell)^g$-isogenies induce correspondences from the finite set
$\sSgpz$ to itself. These correspondences can be used to define
various graphs---in this paper we define {\em{three}} 
$(\ell)^g$-isogeny graphs:
the big isogeny graph $\GR_{\!g}(\ell,p)$, the little isogeny graph
$\gr_{\!g}(\ell, p)$, and the enhanced isogeny graph $\wgr_{\!g}(\ell,p)$.
In this introduction we content ourselves with defining the
simplest of the three, the big isogeny graph $\GR\colonequals \GR_{\!g}(\ell,p)$:
\begin{dfn}
\label{wine}
The vertices of the graph $\GR=\GR_{\! g}(\ell,p)$ are
$\Ver(\GR)=\sSgpz$, so $h=h_g(p)=\#\Ver(\GR)$.
The (directed) edges of the graph $\GR$ connecting the vertex
$[\msA_i]\in\sSgpz$ to the vertex $[\msA_j]\in\sSgpz$ are
\[
\Ed(\GR)_{ij}=\{C\in\Iso_\ell(\msA_i)\mid [\msA_i/C]=[\msA_j]\}.
\]
\end{dfn}
\noindent The adjacency matrix $\Ad(\GR)_{ij}=\#\Ed(\GR)_{ij}$
is a constant row-sum matrix by \eqref{whole}: 
\begin{equation}
\label{now}
\sum_{j=1}^{h}\#\Ed(\GR)_{ij}=\prod_{k=1}^g(\ell^k+1).
\end{equation}

Yevgeny Zaytman and I spoke at the conference on these isogeny graphs
and our results in \cite{jz}, focusing on the theorem:
\begin{thm}
\label{weg}
\textup{(\cite[\S 8]{jz})}\,\,
The isogeny graphs $\GR_{\!g}(\ell,p)$, $\gr_{\!g}(\ell, p)$, and 
$\wgr_{\!g}(\ell,p)$
are connected.
If $g>1$, the regular graph $\GR_{\!g}(\ell,p)$ is in general not Ramanujan.
\end{thm}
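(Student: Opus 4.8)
The plan is to pass from geometry to the adelic language of the quaternionic unitary group, prove connectedness via strong approximation, and prove failure of the Ramanujan property by locating non-tempered automorphic representations in the spectrum of the adjacency operator. Fix $E/\oFp$ and $\cO=\End(E)\subseteq\HH_p$ as above, and let $\G$ be the quaternion Hermitian similitude group over $\Q$ attached to the standard positive definite quaternion Hermitian space of rank $g$ over $\HH_p$, so $\G(\R)$ is compact, $\G(\Q_p)\cong\GU_g(\HH_p\otimes\Q_p)$, and $\G(\Q_v)\cong\mathrm{GSp}_{2g}(\Q_v)$ for every finite $v\neq p$. Using the dictionary between principally polarized superspecial abelian varieties of dimension $g$ and maximal quaternion Hermitian $\cO$-lattices of rank $g$ (Ibukiyama--Katsura--Oort, the mass-formula circle of ideas already visible in $\#\sS_1(p)_0=h(\HH_p)$), one obtains a bijection $\sSgpz\longleftrightarrow\G(\Q)\backslash\G(\mathbb{A}_f)/U$ with $U=\prod_v U_v$ a fixed maximal open compact subgroup, hyperspecial away from $p$, realizing $h=h_g(p)$ as a class number. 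Under it, an $(\ell)^g$-isogeny $\msA\to\msA/C$ with $C\in\Iso_\ell(\msA)$ alters the associated lattice only at $\ell$, across the edge of the building of $\G(\Q_\ell)\cong\mathrm{GSp}_{2g}(\Q_\ell)$ attached to the Siegel parabolic; equivalently $\Ad(\GR)$ is, up to normalization, the Hecke operator $T_\ell$ attached to $U_\ell\,\mathrm{diag}(\ell I_g,I_g)\,U_\ell$, and \eqref{now} is its degree. Since the three graphs are tied together by natural comparison maps over (essentially) the vertex set $\sSgpz$, the essential case is the big graph $\GR=\GR_{\!g}(\ell,p)$, which I treat; the little and enhanced graphs then follow within the same framework (for $\wgr_{\!g}$ one additionally checks transitivity of the relevant automorphism action, which the framework also supplies).

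For connectedness, let $\Gamma_\ell\le\G(\Q_\ell)$ be generated by $U_\ell$ and $\mathrm{diag}(\ell I_g,I_g)$; since the dual of an $(\ell)^g$-isogeny is again one, connectedness of $\GR$ is equivalent to $\G(\mathbb{A}_f)=\G(\Q)\,\Gamma_\ell\,U^\ell$, where $U^\ell=\prod_{v\neq\ell}U_v$. The derived group $\G^{\mathrm{der}}$ is an inner form of $\Sp_{2g}$, hence simply connected, and $\G^{\mathrm{der}}(\Q_\ell)\cong\Sp_{2g}(\Q_\ell)$ is non-compact since $\ell\neq p$, so $\G^{\mathrm{der}}$ satisfies strong approximation relative to $\ell$; this reduces the global statement to two local facts at $\ell$: (i) the multiplier $\mathrm{sim}\colon\G\to\mathbb G_m$ has $\mathrm{sim}(U)=\widehat{\Z}^\times$, so the similitude fibration has base $\Q^\times\backslash\mathbb{A}_f^\times/\widehat{\Z}^\times$, trivial since $\Q$ has class number one; and (ii) in $\mathrm{PGSp}_{2g}(\Q_\ell)$ the element $\mathrm{diag}(\ell I_g,I_g)$ is translation by the minuscule spin cocharacter, whose $\mathrm{PGSp}_{2g}(\Z_\ell)$-conjugates span the whole cocharacter lattice, so $\Gamma_\ell$ surjects onto $\mathrm{PGSp}_{2g}(\Q_\ell)$. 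Together with strong approximation these give $\G(\mathbb{A}_f)=\G(\Q)\,\Gamma_\ell\,U^\ell$. The point demanding care is that one must work in the projective group: inside $\Sp_{2g}(\Q_\ell)$ the subgroup $\langle\Sp_{2g}(\Z_\ell),\mathrm{diag}(\ell I_g,\ell^{-1}I_g)\rangle$ is \emph{proper} once $g>1$, so the naive purely local claim is false and one genuinely needs the multiplier factor and the class-number-one input for $\Q$.

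The Ramanujan statement is spectral. Under the dictionary, $\Ad(\GR)=T_\ell$ acts on $\C[\sSgpz]\cong\bigoplus_\pi\pi_f^U$ --- the sum over automorphic representations $\pi$ of $\G(\mathbb{A})$ with $\pi_\infty$ trivial (as $\G(\R)$ is compact) and $\pi_f^U\neq 0$ --- by the scalar $\lambda_\pi(\ell)$ read off the Satake parameter of the unramified $\pi_\ell$, the trivial $\pi$ giving the degree eigenvalue $\prod_{k=1}^g(\ell^k+1)$. The Ramanujan property forces $|\lambda_\pi(\ell)|\le\rho$ for all other $\pi$, with $\rho$ the norm of $T_\ell$ on the tempered unramified spectrum of $\mathrm{GSp}_{2g}(\Q_\ell)$ (equivalently, the top nontrivial eigenvalue on the universal cover). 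For $g>1$ this fails: the relevant automorphic spectrum of $\G$ contains non-tempered members --- via Jacquet--Langlands and Arthur's classification these match CAP or otherwise non-tempered representations of $\mathrm{GSp}_{2g}$, such as the everywhere non-tempered Saito--Kurokawa representations when $g=2$ --- and for such $\pi$ the parameter at $\ell$ carries a factor of size $\ell^{1/2}$, so $|\lambda_\pi(\ell)|>\rho$. Hence $\GR_{\!g}(\ell,p)$ acquires an eigenvalue past the Ramanujan threshold whenever such a $\pi$ actually contributes, which is the content of ``in general.''

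I expect the main obstacle to be this last step: one must \emph{exhibit} a non-tempered $\pi$ contributing to $\C[\sSgpz]$ --- say via an explicit theta lift or an Eisenstein/residual construction on the similitude quaternionic unitary group --- and then compute $\lambda_\pi(\ell)$ and compare it against $\rho$, the latter itself requiring the purely local evaluation of the norm of the Siegel Hecke operator on the building of $\mathrm{GSp}_{2g}$. By comparison the connectedness half is more bookkeeping than difficulty, the only genuinely delicate point being the projective-group reduction flagged above.
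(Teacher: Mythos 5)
Note first that this paper does not actually prove Theorem \ref{weg}; it defers the proof to \cite[\S 8]{jz}, recording only that ``one ingredient of our proof is strong approximation for the quaternionic unitary group.'' Measured against that declared strategy, the connectedness half of your proposal is on track: translating $\sSgpz$ into a double coset space for the quaternionic unitary similitude group (Theorem \ref{toad}(a) of the paper), identifying $(\ell)^g$-isogenies with motion at the place $\ell$, and then combining strong approximation for the simply connected derived group (an inner form of $\Sp_{2g}$, non-compact at $\ell$ since $\ell\neq p$) with the triviality of $\Q^{\times}\backslash\mathbb{A}_f^{\times}/\hat{\Z}^{\times}$ is exactly the intended mechanism. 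Two caveats: your parenthetical claim that $\langle \Sp_{2g}(\Z_\ell),\mathrm{diag}(\ell I_g,\ell^{-1}I_g)\rangle$ is a \emph{proper} subgroup of $\Sp_{2g}(\Q_\ell)$ for $g>1$ is dubious (the Cartan-decomposition support of products of the double cosets $K\mu(\ell)K$ sweeps out all dominant cocharacters below multiples of $\mu$, and for $\Sp_{2g}$ the coroot lattice is the full cocharacter lattice, so no congruence obstruction survives) --- it is fortunately not load-bearing; and for $\wgr_{\!g}(\ell,p)$, which is the bipartite double cover of $\gr_{\!g}(\ell,p)$, connectedness does \emph{not} follow formally from connectedness of $\gr_{\!g}(\ell,p)$, so the extra argument you wave at (strong approximation for $\U_g$ rather than $\GU_g$, matching the uniformization $\wgr_{\!g}(\ell,p)=\U_g(\cO[1/\ell])\backslash\cSS_{\!g}$) genuinely has to be carried out.

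The genuine gap is in the second assertion. You reduce ``not in general Ramanujan'' to exhibiting a non-tempered automorphic representation $\pi$ with $\pi_f^U\neq 0$ contributing to $\C[\sSgpz]$ and to computing the local threshold $\rho$, and you defer both; as written this half proves nothing. Worse, the reduction is harder than the statement: ``in general not Ramanujan'' requires only a single counterexample, and the paper's own Table 1 already contains one. For $\HH_5$ and $g=2$ the Brandt matrix $B_2(3)=\bigl[\begin{smallmatrix}34&6\\20&20\end{smallmatrix}\bigr]=\Ad(\GR_{\!2}(3,5))$ has trace $54$ and determinant $560$, hence eigenvalues $40$ (the degree $(3+1)(3^2+1)$) and $14$, while the Ramanujan bound for a $40$-regular graph is $2\sqrt{39}\approx 12.49<14$; the entries $B_2(7)$ and $B_2(11)$ give eigenvalue pairs $(400,62)$ and $(1464,164)$ violating $2\sqrt{399}$ and $2\sqrt{1463}$ just as decisively. (One should also say a word about why the eigenvalues of the non-symmetric matrix $B_g(\ell)$ are real --- it is conjugate to a symmetric matrix by the diagonal weight matrix built from the $e_g(j)$ --- before invoking the real-eigenvalue form of the Ramanujan bound.) Your spectral picture via non-tempered (e.g.\ Saito--Kurokawa type) representations is the right \emph{explanation} of these numbers and of the systematic failure for $g>1$, but to count as a proof it would need the explicit construction and local computation you yourself flag as the main obstacle.
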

\noindent 
One ingredient of our proof is strong approximation for the quaternionic 
unitary group.  The quaternionic unitary group has previously
been applied to moduli of abelian varieties in characteristic
$p$; see Ekedahl/Oort
\cite[\S 7, esp.~Lemma 7.9]{o1}, Chai/Oort
\cite[Prop.~4.3]{co}, 
and Chai \cite[Prop.~1]{c}.
In my lecture and here
I treat the general background and broader context of the isogeny
graphs.  Zaytman will explain the proof of Theorem \ref{weg}.
A common notation is shared between the two papers.
Full proofs and references for the results considered here can be
found in \cite{jz}.

My task of explaining isogeny graphs in arithmetic
geometry is complicated by their opaque history: The subject 
is certainly over 75 years old, dating back at least to Brandt
\cite{br} from 1943.  During this time, our graphs
appear in disguises and in variations: the big, little, and enhanced
isogeny graphs all are there.  So the broader contexts and work 
done in other settings are not readily accessible.
Let me give a personal example:  The work on these graphs I have used most
from graduate student days to the present is the 1979 paper
\cite{k} of Kurihara entitled {\em On some examples of equations defining
Shimura curves and the Mumford uniformization}. Who would guess that
this had anything to do with isogeny graphs? In fact, the word ``isogeny''
does not appear in the entire paper.

It is perhaps helpful to list in chronological order
the four lives of our isogeny graphs, with \textbf{A}, \textbf{B},
and \textbf{C} subsequently playing a role in our story:\\[.05in]
\textbf{A. 1943 -- : Brandt matrices.}\,
In this first appearance there are no graphs, no elliptic curves,
and no abelian varieties---only the Brandt matrices
which are the adjacency matrices of $\GR_{\!g}(\ell,p)$ and 
the weighted adjacency matrices of $\gr_{\!g}(\ell,p)$.  The 
major theorem was the
trace formula.  Brandt \cite{br} defined the matrices for $g=1$, 
primarily treating definite
quaternion algebras over $\Q$.  Eichler then introduces strong approximation 
and develops the theory for higher weight and totally real fields in case
$g=1$,
including the trace formula.  Shimura \cite{sh}
laid the foundations to generalize to $g>1$ and the quaternionic unitary
group; Brandt matrices in this setting were defined in the 1980's by
Hashimoto, Ibukiyama, Ihara, and Shimizu---see \cite{h}.  
Gross's algebraic modular forms \cite{g} subsequently provided a more
general context for these matrices.\\
\textbf{B. 1976 -- : Shimura curves and $\mathbf{g=1}$.}
In this incarnation the graphs appear, but not from isogenies of
supersingular elliptic curves. Rather they arise from the bad reduction
of Shimura curves in the work of \v{C}erednik and Drinfeld. The explicit
graphs are deduced from the results of \v{C}erednik and Drinfeld in
\cite{jl2}, the jacobian of the  graph $\wgr_{\!1}(\ell, p)$ is computed
in \cite{jl3}, and the integral Hodge theory of the graphs
$\gr_{\!1}(\ell, p)$, $\wgr_{\!1}(\ell,p)$ with applications to congruences
between newforms and old forms is in \cite{jl1}.   
In \S \ref{lion1} of this paper and \cite[\S 9]{jz}
we uniformize  $\gr_{\!1}(\ell,p)$ and 
$\wgr_{\!1}(\ell,p)$ as quotients of the tree $\Delta=\Delta_\ell$ 
for $\SL_2(\Q_\ell)$ using Kurihara
\cite{k}.
The question of how the \v{C}erednik-Drinfeld results generalize
to the higher-dimensional case $g>1$ remains a magnet for research.\\
\textbf{C. 1988 -- : LPS graphs; Ramanujan graphs and complexes.}
In an influential paper, Lubotzky, Phillips, and Sarnak \cite{lps} construct
families of Cayley graphs from the Hamilton quaternions and show that 
they are Ramanujan. This work made the Ramanujan property a central
focus.  These LPS graphs are shown to be explicit
covers of the little isogeny graph 
 $\gr_{\!1}(\ell,2)$ in \cite[\S 3] {jl4}. A higher-dimensional
Ramanujan complex was first constructed in \cite{jl5}.  But the full flowering
of Ramanujan complexes is due to the work over the last 15 years of
Alex Lubotzky and Winnie Li, together with their students, collaborators,
and colleagues.
Obviously this thread begs for a notion of isogeny complex to generalize
isogeny graph; this is the subject of \cite{jz1}.\\
\textbf{D. 2011 -- and 1986 -- : Applications of isogenies.}
In this current optic, applications are found for isogenies
and isogeny graphs.
This begins with Mestre's 1986 ``m\'{e}thode des graphes'' \cite{me}
for computing Hecke operators.  Then another completely different application
is introduced with Jao and de Feo's 2011 proposal  \cite{jd} for an
isogeny-based key exchange using supersingular elliptic curves. 

\section{Polarizations of superspecial abelian varieties}
\label{frog}

Let $X$ be an abelian variety over a 
field $k$ of dimension $g$; the dual abelian variety $\hat{X}=\Pic^{0}(X)$ is defined over $k$.
A homomorphism $\tau:X\rightarrow \hat{X}$ is {\sf symmetric}
if $\hat{\tau}=\tau$, where we identify $X=\hat{\hat{X}}$ via the
canonical isomorphism
\begin{equation}
\label{venus}
\kappa_X:X\stackrel{\simeq}{\longrightarrow}\hat{\hat{X}} \text{ of 
\cite[Thm.~7.9]{vdG}, for example.}
\end{equation}
Let $\mathcal{P}$ be the Poincar\'{e} line bundle on $X\times\hat{X}$.
\begin{dfn}
\label{radio}
\textup{ (cf. \cite[Cor.~11.5, Defn.~11.6]{vdG}) }
A {\sf polarization} of an abelian variety $X$ over a field $k$ is a
symmetric isogeny $\lambda:X\rightarrow \hat{X}$ over $k$ such that the line
bundle $({\rm id}_X,\lambda)^\ast\mathcal{P}$ is ample.
The {\sf degree} $\deg(\lambda)$ of the polarization $\lambda$
is the degree of the isogeny $\lambda$, i.e., $\#\ker(\lambda)$.
The degree $\deg(\lambda)$ is always a square by the Riemann-Roch theorem,
see \cite[\S 16]{m}.
It is convenient to define the {\sf reduced degree} ${\rm rdeg}(\lambda)$
of the polarization $\lambda$ to be ${\rm rdeg}(\lambda)=\sqrt{\deg(\lambda)}$.
A polarization of degree $1$ is a {\sf principal polarization}.
\end{dfn}
\begin{rmk}
\label{mars}
{\rm
For a polarization $\lambda$ we
have 
${\rm rdeg}(n\lambda)=n^{g}{\rm rdeg}(\lambda)$. For an isogeny $f:X\rightarrow X'$
and polarization $\lambda '$ on $X'$, 
${\rm deg} f^\ast(\lambda')={\rm deg}(f)^2\deg \lambda'$.
}
\end{rmk}
\noindent Many of the results in this section can be found in
the paper \cite{iko} by
 Ibukiyama, Katsura, and Oort  and many were known to 
Serre.

Let $\HH$ be a positive definite quaternion algebra over $\Q$ 
with a maximal order $\cO_{\HH}$, main involution $x\mapsto \overline{x}$,
and reduced norm $\Nm_{\HH/\Q}(x)=\Nm(x)=x\overline{x}$.
The reduced norm $\Nm:\Mat_{g\times g}(\HH)\rightarrow \Q$  is the 
multiplicative polynomial of degree $2g$ generalizing the reduced norm
$\Nm:\HH\rightarrow \Q$.  Put
\[
\SL_g(\cO_\HH)=\{ M\in\Mat_{g\times g}(\cO_\HH) \mid \Nm(M)=1\}.
\]

A matrix  $H\in\Mat_{g\times g}(\HH)$ is {\sf hermitian} if 
$H=H^\dagger\colonequals \overline{H}^t$. Set
\begin{equation}
\label{mink}
\msH_g(\cO_\HH)=\{H\in \Mat_{g\times g}(\cO_{\HH})\mid H 
\text{ is positive-definite hermitian}\}.
\end{equation}
The ``Haupt norm'' $\HNm$ of Braun-Koecher \cite[Chap.~2, \S4]{bk}
(see also \cite[Thm.~6 and proof, \S 21]{m}) gives a map
$\HNm:\msH_g(\cO_{\HH})\rightarrow \N$. For an integer $d\geq 1$ put
\begin{equation}
\label{mink1}
\msH_{g,d}(\cO_\HH)=\{H\in \msH_g(\cO_{\HH})\mid 
\HNm(H)=d\}.
\end{equation}

\begin{dfn}
\label{whale}
Let  $\HH$ be a 
definite quaternion division algebra over $\Q$ with  maximal
order $\cO_{\HH}$. Set 
$\cO_{\hat{\HH}}=\cO_\HH\otimes\hat{\Z}$, the profinite completion of 
$\cO_\HH$, and $\hat{\HH}=\cO_{\hat{\HH}}\otimes\Q$.

There is a 
right action of $M\in\SL_g(\cO_\HH)$ on hermitian 
$H\in\msH_{g,d}(\cO_{\HH})$: 
\begin{equation}
\label{grab}
H\cdot M\colonequals M^\dagger H M .
\end{equation}
Set
$\overline{\msH}_{\!\!g,d} (\cO_\HH)\colonequals \msH_{g,d}(\cO_\HH)/\SL_g(\cO_\HH)$ with
$[H]\in\overline{\msH}_{\!\!g,d}(\cO_\HH)$ the class defined by 
$H\in\msH_{g,d}(\cO_\HH)$.

If $B$ is an algebra with anti-involution with fixed ring $R$, set
\begin{align*}
\U_g(B)&=\{M\in\Mat_{g\times g}(B) \mid M^\dagger M=\Id_{g\times g}\},\\
\GU_g(B)&=\{M\in\Mat_{g\times g}(B)\mid M^\dagger M=\lambda \Id_{g\times g}
\text{ with } \lambda\in R^\times\}.
\end{align*}
We will have  $B=\cO=\cO_E,\,\cO_\HH,\,\cO_\HH[1/\ell],\,\HH, \,
\cO_{\hat{\HH}}, \text{ and }\hat{\HH}$ in the course of this paper.
\end{dfn}

We now consider polarizations on $A/\oFp=E^g/\oFp$ with
$\End(E)=\cO_E=\cO\subseteq \HH_p$. 
For $\A=(A, \lambda)$ a polarized superspecial abelian variety over $\oFp$,
let $[\A]$ denote the isomorphism class defined by $\A$ over $\oFp$.

Let $\lambda_0$ be the
standard product polarization of $A$; 
the polarization $\lambda_0$ is principal.  For 
$H\in\msH_{g,d}(\cO)\subseteq
\Mat_{g\times g}(\cO)$, let $\lambda_H$ be the polarization with
\[
\lambda_{H}:A\stackrel{H}{\longrightarrow}A
\stackrel{\lambda_0}
{\longrightarrow} \hat{A}.
\]
If $H\in\msH_{g,d}(\cO)$, then $\lambda_H$ is a polarization of $A=E^g$
with reduced degree ${\rm rdeg}(\lambda_H)=d$.

\begin{thm}
\label{redhawk}
If $g>1$, there are one-to-one correspondences induced by associating the
polarization $\lambda_H$ to the hermitian matrix $H$:
\begin{enumerate}[\upshape (a)]
\item
\label{redhawk1}
polarizations $\lambda$ of $A=E^g$  with ${\rm rdeg}(\lambda)=d
\longleftrightarrow \msH_{g,d}(\cO)$ and
\item
isomorphism classes $[\A=(A=E^g, \lambda)]$ with
${\rm rdeg}(\lambda)=d \longleftrightarrow 
\overline{\msH}_{\!\!g,d}(\cO)$.
\end{enumerate}
\end{thm}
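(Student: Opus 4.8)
The plan is to translate the geometric data of a polarization $\lambda$ on $A=E^g$ into linear-algebra data over $\cO$, using the fact that $\End(A)=\Mat_{g\times g}(\cO)$ and that $\lambda_0$ is a fixed principal polarization to rigidify everything. First I would observe that since $\lambda_0\colon A\xrightarrow{\sim}\hat A$ is an isomorphism, every homomorphism $\lambda\colon A\to\hat A$ factors uniquely as $\lambda=\lambda_0\circ H$ for a unique $H\in\End(A)=\Mat_{g\times g}(\cO)$; this sets up a bijection between $\Hom(A,\hat A)$ and $\Mat_{g\times g}(\cO)$. The task is then to characterize, in terms of $H$, which $\lambda$ are polarizations of a given reduced degree $d$. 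For part (\ref{redhawk1}): the Rosati involution attached to $\lambda_0$ on $\End^0(A)=\Mat_{g\times g}(\HH_p)$ is $M\mapsto M^\dagger=\overline M^t$ (this is essentially the defining property of the product polarization, together with the fact that the Rosati involution on $\cO_E=\cO$ attached to the canonical principal polarization of $E$ is the main involution $x\mapsto\overline x$). Under the dictionary $\lambda\leftrightarrow H$, symmetry of $\lambda$ (i.e.\ $\hat\lambda=\lambda$ under $\kappa_A$) translates exactly into $H=H^\dagger$, i.e.\ $H$ hermitian; here one must check the compatibility of $\kappa_A$ with $\kappa_E^{\,g}$ and the identification $\hat{\hat A}=A$, which is where the bookkeeping lives. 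Positivity of the Rosati form forces that $\lambda$ is a polarization (rather than merely a symmetric isogeny with an anti-ample bundle) if and only if $H$ is positive-definite hermitian in the sense of \eqref{mink}; this is the standard equivalence between polarizations and positive symmetric elements, e.g.\ \cite[\S 21]{m} or \cite{iko}. Finally $\deg\lambda_H=\deg H\cdot\deg\lambda_0=\deg H$, and the reduced degree condition $\operatorname{rdeg}(\lambda_H)=d$ becomes the Haupt-norm condition $\HNm(H)=d$, matching the definition \eqref{mink1} of $\msH_{g,d}(\cO)$; one should cite Braun–Koecher \cite[Chap.~2,\S4]{bk} for the fact that $\HNm$ is precisely $\operatorname{rdeg}$ on hermitian matrices, which is exactly what makes the definition of $\msH_{g,d}$ the right one. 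This proves (a).

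For part (b) I would quotient both sides of (a) by the action of $\Aut(A)=\GL_g(\cO)$, or more precisely the subgroup fixing $\lambda_0$'s reduced-degree bookkeeping. Two polarized varieties $(A,\lambda_H)$ and $(A,\lambda_{H'})$ are isomorphic over $\oFp$ iff there is $M\in\Aut(A)=\GL_g(\cO)$ with $M^*\lambda_{H'}=\lambda_H$; pulling back through $\lambda_0$ and using $\widehat M\circ\lambda_0=\lambda_0\circ M^\dagger$ (the Rosati-compatibility already used), this reads $H=M^\dagger H' M$. A priori $M$ ranges over $\GL_g(\cO)$, but computing reduced degrees gives $\operatorname{rdeg}(\lambda_H)=\Nm(M)^2\operatorname{rdeg}(\lambda_{H'})$ wait — more carefully, $\deg\lambda_H=\deg(M)^2\deg\lambda_{H'}$ by Remark \ref{mars}, so equality of reduced degrees forces $\Nm(M)^{2}=1$, hence $\Nm(M)=1$ (reduced norms of elements of $\Mat_{g\times g}(\cO)$ are positive rationals that are integral, in fact here $=\pm1$ forces $+1$), i.e.\ $M\in\SL_g(\cO)$. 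Therefore the isomorphism relation on $\msH_{g,d}(\cO)$ is exactly the right $\SL_g(\cO)$-action $H\cdot M=M^\dagger HM$ of \eqref{grab}, and the quotient is $\overline{\msH}_{\!\!g,d}(\cO)$ by definition. Combining with (a) gives the bijection in (b).

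The main obstacle is not any single hard theorem but the careful verification of the Rosati-involution identity: that under the product polarization $\lambda_0$ on $A=E^g$, the Rosati involution on $\Mat_{g\times g}(\cO_E)$ is $M\mapsto\overline M^t$, equivalently $\widehat M\circ\lambda_0=\lambda_0\circ\overline M^t$ for all $M\in\Mat_{g\times g}(\cO_E)$. This requires (i) the single-curve fact that the Rosati involution on $\cO_E$ for the canonical principal polarization of $E$ is the main involution $x\mapsto\overline x=\operatorname{Tr}(x)-x$ — standard for supersingular (indeed any) elliptic curves — and (ii) tracking how duals, the canonical isomorphism $\kappa$ of \eqref{venus}, and products interact, i.e.\ that $\widehat{A\times B}=\hat A\times\hat B$ compatibly with $\kappa$ and that $\widehat{(f,g)}$ is the transpose-conjugate in block form. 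Everything else — the polarization $\Leftrightarrow$ positive-symmetric dictionary, the degree/reduced-degree computations, and the passage to isomorphism classes — is then formal once this identity is in hand, and where it is not routine one can simply cite \cite{m,iko,bk}. I would also remark that the hypothesis $g>1$ is used only to invoke Theorem \ref{DOS} so that \emph{every} principally polarized superspecial $A$ is literally $E^g$ (not merely isogenous to it); for the statement as phrased, with $A=E^g$ fixed, the argument is uniform in $g$.
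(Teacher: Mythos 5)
Your proposal is correct and follows exactly the route the paper intends: Theorem \ref{redhawk} is stated without proof, with the argument deferred to Ibukiyama--Katsura--Oort \cite{iko} and to \cite{jz}, and that argument is precisely your dictionary $\lambda=\lambda_0\circ H$, symmetry $\leftrightarrow$ $H=H^\dagger$ via the Rosati involution of the product polarization, ampleness $\leftrightarrow$ positive-definiteness, ${\rm rdeg}\leftrightarrow\HNm$, and for (b) the quotient by $\Aut(A)$ acting by $H\mapsto M^\dagger HM$. The only point worth smoothing is in (b): no degree computation is needed to force $\Nm(M)=1$, since the reduced norm is a non-negative integer on $\Mat_{g\times g}(\cO)$ (as $\HH_p$ is definite), so every unit has reduced norm $1$ and $\Aut(A)=\GL_g(\cO)=\SL_g(\cO)$ outright.
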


For $H\in\msH_{g,d}(\cO)$ with $g>1$, we denote by 
$\A(H)=(A=E^g,\lambda_H)$ the associated
polarized superspecial abelian variety.
Theorem \ref{redhawk} allows us to describe the isomorphism
classes $\sSgpz$ of principally polarized superspecial abelian
varieties $[\msA]$ over $\oFp$ as in \eqref{rent}.

\begin{prp}
\label{radish}
If $g>1$, there is a bijection
\[
\overline{\msH}_{\!\!g,1}(\cO)\leftrightarrow \sSgpz
\]
associating $[H]\in\overline{\msH}_{\!\!g,1}(\cO)$ to $[\A(H)]\in\sSgpz$.
\end{prp}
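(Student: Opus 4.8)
The plan is to deduce Proposition~\ref{radish} from part~(b) of Theorem~\ref{redhawk} in the case $d=1$, the only additional ingredient being Theorem~\ref{DOS}. First I would record that by Definition~\ref{radio} one has ${\rm rdeg}(\lambda)=\sqrt{\deg(\lambda)}$ for every polarization $\lambda$, so the condition ${\rm rdeg}(\lambda)=1$ is the same as $\deg(\lambda)=1$, i.e.\ the same as $\lambda$ being a principal polarization. Hence Theorem~\ref{redhawk}(b) with $d=1$ already furnishes a bijection
\[
\overline{\msH}_{\!\!g,1}(\cO)\;\longleftrightarrow\;\bigl\{\,[\A=(A=E^g,\lambda)]:\lambda\text{ a principal polarization of }E^g\,\bigr\}
\]
sending $[H]$ to $[\A(H)]=[(E^g,\lambda_H)]$. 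The entire remaining task is to check that the set on the right is exactly $\sSgpz$ as defined in \eqref{rent}.

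For the inclusion $\subseteq$ there is nothing to do: $E^g$ is a product of supersingular elliptic curves, hence superspecial, and a degree-$1$ polarization is principal by definition, so every class $[(E^g,\lambda)]$ occurring above belongs to $\sSgpz$. For the reverse inclusion, take $[\msA]\in\sSgpz$, say $\msA=(A,\lambda)$ with $A$ superspecial of dimension $g>1$ and $\lambda$ a principal polarization. By Theorem~\ref{DOS} there is an isomorphism $\phi\colon E^g\stackrel{\simeq}{\longrightarrow}A$, and the pullback $\phi^\ast\lambda$ is again a polarization, of degree $\deg(\phi)^2\deg(\lambda)=1$ by Remark~\ref{mars}, hence a principal polarization of $E^g$; since $\phi$ is then an isomorphism $(E^g,\phi^\ast\lambda)\cong(A,\lambda)=\msA$ of polarized abelian varieties, $[\msA]=[(E^g,\phi^\ast\lambda)]$ lies in the right-hand set. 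Combining the two identifications gives the bijection $\overline{\msH}_{\!\!g,1}(\cO)\leftrightarrow\sSgpz$, $[H]\mapsto[\A(H)]$, asserted in the proposition.

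There is no genuine obstacle in this route beyond bookkeeping: the substantive content has already been spent in Theorem~\ref{redhawk} (the parametrization of polarizations of $E^g$ by hermitian matrices over $\cO$, and the passage to isomorphism classes via the $\SL_g(\cO)$-action) and in Theorem~\ref{DOS} (the uniqueness of the superspecial abelian variety for $g>1$), so the proposition is really a corollary; what is being verified above is only that the equivalence relation in Theorem~\ref{redhawk}(b)---isomorphism of polarized abelian varieties with underlying variety taken to be $E^g$---matches, via Theorem~\ref{DOS}, the one used to define $\sSgpz$. The single point that merits a word of care is that the automorphism group of $E^g$ is $\GL_g(\cO)=\Aut(E^g)$ while $\overline{\msH}_{\!\!g,1}(\cO)$ is formed using the $\SL_g(\cO)$-action; these agree because over a maximal order in a definite quaternion algebra every invertible matrix over $\cO$ has reduced norm $1$, and in any case this is already folded into Theorem~\ref{redhawk}(b), which we may invoke directly.
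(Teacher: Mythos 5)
Your proposal is correct and follows exactly the route the paper intends: the proposition is presented as an immediate consequence of Theorem~\ref{redhawk}(b) with $d=1$, combined with Theorem~\ref{DOS} to identify every principally polarized superspecial $[\msA]$ with a class $[(E^g,\lambda)]$, which is precisely what you spell out (including the correct observations that $\mathrm{rdeg}(\lambda)=1$ means principal and that $\GL_g(\cO)=\SL_g(\cO)$ for a definite quaternion algebra). Nothing further is needed.
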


As discussed in $\S 1$, the theory of superspecial abelian varieties
bifurcates into the cases  $g>1$ and $g=1$.  In spite of this we are
able to give a uniform treatment of the principally polarized
isomorphism classes in Theorem \ref{toad} below by shifting from the hermitian matrices for 
$g>1$ in Proposition \ref{radish} to a notion of hermitian modules.

Let 
$H_0$ be the hermitian form on $\HH^g$ given by $\Id_{g\times g}$.
Let $L\subseteq \HH^g$ be a finitely generated
 right $\cO_\HH$-module such that $L\otimes\Q=\HH^g$. We say that 
$L$ is {\sf principally polarized} if there exists 
$c\in\Q^\times$ such that  $cH_0|_L$
is $\cO_\HH$-valued and unimodular. We define the {\sf dual}
of $L$ to be $\hat{L}=c^{-1}L$.

We can classify principally polarized right $\cO_\HH$-modules.
For $M\in\GU_g(\hat{\HH})$, denote by $[M]$ the coset containing $M$
in $\GU_g(\hat{\HH})/\GU_g(\cO_{\hat{\HH}})$ and define the
principally polarized right $\cO_{\HH}$-module $\gamma(M)\colonequals
M\cO^g_{\hat{\HH}}\cap \HH^g$. 

\begin{thm}
A one-to-one correspondence
\[
\{\text{principally polarized   right $\cO_\HH$-modules}\}\leftrightarrow
\GU_g(\hat{\HH})/\GU_g(\cO_{\hat{\HH}})
\]
is given by 
$\GU_g(\hat{\HH})/\GU_g(\cO_{\hat{\HH}})\ni M\leftrightarrow
\gamma(M)=M\cO^g_{\hat{\HH}}\cap \HH^g$.
\end{thm}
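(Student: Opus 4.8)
The plan is to realize this as an adelic (local–global) classification, in the spirit of the classical bijection between ideal classes in a quaternion algebra and a double coset space. First I would fix the base module $\cO_\HH^g\subseteq\HH^g$ with its hermitian form $H_0=\Id$, which is principally polarized with $c=1$. Given any principally polarized right $\cO_\HH$-module $L$, the rank hypothesis $L\otimes\Q=\HH^g$ lets me compare $L$ and $\cO_\HH^g$ locally: for each rational prime $p$ the completion $L_p\colonequals L\otimes\Z_p$ is a right $\cO_{\HH,p}$-lattice in $\HH_p^g$, hence of the form $M_p\cO_{\HH,p}^g$ for some $M_p\in\GL_g(\HH_p)$, uniquely determined up to right multiplication by $\GL_g(\cO_{\HH,p})$; and $M_p$ lies in $\GL_g(\cO_{\HH,p})$ for almost all $p$ since $L_p=\cO_{\HH,p}^g$ there. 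The tuple $(M_p)_p$ thus defines an element $M\in\GL_g(\hat{\HH})$ modulo $\GL_g(\cO_{\hat{\HH}})$, and conversely $L=M\cO_{\hat\HH}^g\cap\HH^g$ recovers $L$ from $M$ by the standard lattice-from-adeles construction, so without the polarization condition one already has a bijection $\{\text{right }\cO_\HH\text{-lattices of full rank}\}\leftrightarrow\GL_g(\hat{\HH})/\GL_g(\cO_{\hat{\HH}})$.

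The next step is to cut this down by the principal-polarization constraint, matching the subgroup $\GU_g$ on the adelic side. If $L=M\cO_{\hat\HH}^g\cap\HH^g$, then the condition that some $c\in\Q^\times$ makes $cH_0|_L$ be $\cO_\HH$-valued and unimodular is a purely local condition prime by prime: $cH_0$ restricted to $L_p=M_p\cO_{\HH,p}^g$ is $\cO_{\HH,p}$-valued and unimodular iff $c\,M_p^\dagger H_0 M_p=c\,M_p^\dagger M_p$ lies in $\GL_g(\cO_{\HH,p})$ and is hermitian, i.e.\ iff $M_p^\dagger M_p$ is $c^{-1}$ times a unit hermitian matrix over $\cO_{\HH,p}$. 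Absorbing scalars, this says precisely that $M_p\in\GU_g(\cO_{\HH,p})\cdot(\text{scaling})$ — more carefully, that the class of $M_p$ in $\GL_g(\HH_p)/\GL_g(\cO_{\HH,p})$ has a representative in $\GU_g(\HH_p)$; I would phrase the definition of $\GU$ so that the similitude factor $\lambda\in R^\times$ (here $R=\Q$ globally, $R=\Q_p$ locally) carries exactly the scalar $c$. Patching these local statements, $L$ is principally polarized iff $M$ can be chosen in $\GU_g(\hat{\HH})$, and then $[M]\in\GU_g(\hat{\HH})/\GU_g(\cO_{\hat{\HH}})$ is well defined; this is the map $\gamma(M)=M\cO_{\hat\HH}^g\cap\HH^g$ in reverse. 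Checking that changing $c$ (equivalently, the choice of dual $\hat L=c^{-1}L$) corresponds exactly to the similitude ambiguity in $\GU_g$ versus $\U_g$ is the bookkeeping that makes the correspondence clean, and it is also where one sees the statement is about $\GU$ and not $\U$.

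The main obstacle, I expect, is the local surjectivity/normalization at $p$: showing that every hermitian unimodular $\cO_{\HH,p}$-lattice in $\HH_p^g$ is $\GU_g(\HH_p)$-equivalent to the standard one $(\cO_{\HH,p}^g,\Id)$, so that the local double coset really is $\GU_g(\HH_p)/\GU_g(\cO_{\HH,p})$ and not something larger. At primes $p$ where $\HH$ is split this is the classification of unimodular hermitian (i.e., alternating or symmetric, depending on the involution type) lattices over $\Mat_2(\Z_p)$, reducible by Morita equivalence to a statement over $\Z_p$; at the ramified primes and at $p=2$ one needs the structure theory of hermitian forms over the local quaternion order, which is where \cite{bk} and the Braun–Koecher/$\HNm$ machinery, or the analysis in \cite{iko}, enters. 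Once local uniqueness of the unimodular hermitian lattice is in hand, the global bijection follows formally: injectivity because two full-rank modules with the same adelic class and both principally polarized are equal (intersect the same adelic lattice with $\HH^g$), and surjectivity because any $M\in\GU_g(\hat{\HH})$ produces via $\gamma(M)$ a lattice whose localizations are all unimodular hermitian, hence principally polarized by the local normalization applied in reverse. I would close by remarking that this is the module-theoretic refinement of Proposition \ref{radish} and Theorem \ref{redhawk}, now valid uniformly in $g$ including $g=1$, with the genus/class-number finiteness of $\GU_g$ (strong approximation failing, but the double coset still finite) implicit in the earlier assertion that $h_g(p)<\infty$.
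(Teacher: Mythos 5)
Your adelic/local--global argument is correct and is essentially the intended proof: the paper states this theorem without proof (full details are deferred to \cite{jz}), and the argument there is exactly the lattice--adele dictionary for $\GL_g(\hat{\HH})/\GL_g(\cO_{\hat{\HH}})$ cut down by the polarization condition, with the local step you single out --- transitivity on unimodular quaternion-hermitian lattices, so that each $L_p$ admits a generator in $\GU_g(\HH_p)$ --- supplied by Shimura's local classification \cite{sh} (by elementary divisors at ramified primes, and by Morita equivalence to unimodular alternating $\Z_p$-lattices at split primes) rather than by \cite{bk} or \cite{iko}. There is no gap beyond that local normalization, which you correctly identify as the essential input.
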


Finally we define the
{\sf classes} of $\GU_g(\cO_\HH)$, denoted $\msP_g(\cO_\HH)$, as the
equivalence classes $[L]$ of principally polarized right
$\text{$\cO_\HH$-submodules $L$ up to left multiplication by 
$\GU_g(\HH)$}$:
\begin{equation}
\label{mallard}
\msP_g(\cO_\HH)=\GU_g(\HH)\backslash \GU_g(\hat{\HH})/\GU_g(\cO_{\hat{\HH}}).
\end{equation}
The set $\msP_g(\cO_\HH)$ is finite. We define the {\sf class number} $h_g(\HH)$
of $\GU_g(\cO_{\HH})$ by 
\begin{equation}
\label{fish}
h_g(\HH)=\#\msP_g(\cO_\HH);
\end{equation}
it is independent of the choice of maximal order $\cO_\HH$.
\begin{rmk}
\label{cow}
{\rm
In case $g=1$, \eqref{mallard} becomes
$\msP_1(\cO_\HH)=\HH^\times\backslash\hat{\HH}^\times/
\cO_{\hat{\HH}}$.  Hence $\msP_1(\cO_\HH)$ is the usual ideal classes
of $\HH$ and $h_1(\HH)=\#\msP_1(\cO_\HH)$ is the usual class number
$h(\HH)$ of the quaternion algebra $\HH$. In particular
$\msP_1(\cO)$ is in one-to-one correspondence with
$\sS_1(p)_0$.
}
\end{rmk}
For $g>1$ we can relate principally polarized right $\cO_\HH$-modules
to hermitian matrices using strong 
approximation---see \cite[\S 2]{jz}---obtaining:
\begin{thm}
\label{thrush}
If $g>1$, the set $\overline{\msH}_{\!\!g,1}(\cO_{\HH})$ is in one-to-one 
correspondence with $\msP_g(\cO_\HH)$.
\end{thm}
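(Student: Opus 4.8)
The plan is to compare the two classifications we already have in hand: on the one side, Proposition \ref{radish} together with Theorem \ref{redhawk}(b) identifies $\overline{\msH}_{\!\!g,1}(\cO_\HH)$ with the set of isomorphism classes of principally polarized superspecial abelian varieties (when $\HH=\HH_p$ this is $\sSgpz$, but the combinatorial statement is purely about the arithmetic of $\cO_\HH$ and does not really use $p$); on the other side, the theorem classifying principally polarized right $\cO_\HH$-modules identifies them with $\GU_g(\hat\HH)/\GU_g(\cO_{\hat\HH})$, and then $\msP_g(\cO_\HH)$ is the quotient of that by the left action of $\GU_g(\HH)$. So the task is to produce a canonical bijection between the double-coset set $\GU_g(\HH)\backslash\GU_g(\hat\HH)/\GU_g(\cO_{\hat\HH})$ and the set of $\SL_g(\cO_\HH)$-orbits on $\msH_{g,1}(\cO_\HH)$.

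First I would unwind $\msH_{g,1}(\cO_\HH)$ adelically. A positive-definite hermitian matrix $H\in\Mat_{g\times g}(\cO_\HH)$ with $\HNm(H)=1$ is the Gram matrix, in the standard basis, of a unimodular positive-definite $\cO_\HH$-hermitian lattice inside $(\HH^g,H_0)$; equivalently, writing $H=N^\dagger N$ over a suitable completion one sees that the lattices $\cO_\HH^g\cdot g$ with $g\in\GU_g(\hat\HH)$ range exactly over the $\cO_\HH$-hermitian lattices in $\HH^g$ that are, locally everywhere, isometric to the standard one, i.e.\ the principally polarized right $\cO_\HH$-modules of the definition. The point is that locally at each finite place $v$ there is only one such lattice up to isometry by the $\GU_g$-action (this is where one invokes the local theory / strong approximation as in \cite[\S2]{jz}), so the genus is a single adelic orbit, giving the bijection $\{\text{principally polarized }L\}\leftrightarrow\GU_g(\hat\HH)/\GU_g(\cO_{\hat\HH})$ already quoted. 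Then two such lattices are isometric over $\HH$ — i.e.\ define the same point of $\overline{\msH}_{\!\!g,1}(\cO_\HH)$ — precisely when they differ by a global element of $\GU_g(\HH)$ acting on the left, which is exactly the passage to $\msP_g(\cO_\HH)$. The only mild nuisance is bookkeeping the difference between $\GU$ and $\U$ (the similitude factor) and between $\SL_g(\cO_\HH)$ and $\U_g(\cO_\HH)$: one checks that rescaling $H$ by $c\in\Q^\times$ and the similitude factors wash out consistently on both sides, so that the reduced-degree-$1$ normalization on the matrix side corresponds to the principal-polarization normalization on the module side, and the $\SL_g$-orbits match the $\GU_g(\HH)$-orbits.

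Concretely, the bijection itself is: send $[H]\in\overline{\msH}_{\!\!g,1}(\cO_\HH)$ to the class of the hermitian module $L_H$ obtained by choosing $M\in\GU_g(\hat\HH)$ with $M^\dagger M$ locally congruent to $H$ and setting $L_H=\gamma(M)=M\cO_{\hat\HH}^g\cap\HH^g$; conversely, given a principally polarized $L$, choose an $\cO_\HH$-basis and let $H_L$ be the Gram matrix of $cH_0|_L$, which lies in $\msH_{g,1}(\cO_\HH)$ and is well-defined up to the $\SL_g(\cO_\HH)$-action (change of basis). One then verifies these are mutually inverse, using Theorem \ref{redhawk}(a) to know that every $H\in\msH_{g,1}(\cO_\HH)$ really does arise as such a Gram matrix and that two matrices give isomorphic data iff they are $\SL_g(\cO_\HH)$-equivalent.

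I expect the main obstacle to be precisely the local-to-adelic step: proving that the genus of the standard unimodular $\cO_\HH$-hermitian lattice $(\cO_\HH^g,H_0)$ consists of a single $\GU_g$-adelic orbit, equivalently that at each finite place the class number of the local hermitian form is one. For places not dividing the discriminant of $\HH$ this is the classical statement that unimodular hermitian (really, after base change, split) forms over a local field are unique in their genus; at the ramified place(s) of $\HH$ one needs the corresponding statement for the hyperbolic/anisotropic hermitian form over the local division algebra, and it is here that the structure of $\GU_g$ over a local quaternion algebra and a strong-approximation argument for the relevant algebraic group enter — which is exactly the content the paper defers to \cite[\S2]{jz}. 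Once that normalization and the $\GU$-versus-$\U$ similitude bookkeeping are settled, the remaining identifications are formal.
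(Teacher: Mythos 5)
Your proposal matches the route the paper itself indicates: the paper gives no proof here, deferring to \cite[\S 2]{jz}, but explicitly flags that the identification of $\overline{\msH}_{\!\!g,1}(\cO_\HH)$ with $\msP_g(\cO_\HH)$ rests on strong approximation for the quaternionic unitary group, which is exactly the genus--class argument (one local orbit at every finite place, then global $\GU_g(\HH)$-equivalence versus $\SL_g(\cO_\HH)$-equivalence of Gram matrices) that you lay out. Your identification of the single-class local statement at the split and ramified places as the real content, with the rest being bookkeeping, is the correct assessment.
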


We thus obtain the following description of $\sSgpz$.
\begin{thm}
\label{toad}
We have one-to-one correspondences:
\begin{enumerate}[\upshape (a)]
\item
For $g\geq 1$, $
\sSgpz\longleftrightarrow \msP_g(\cO)= \GU_g(\HH_p)\backslash 
\GU_g(\hat{\HH}_p)/\GU_g(\cO_{\hat{\HH}_p})$.
\item
For $g>1$,
$\sSgpz\longleftrightarrow \msP_g(\cO)\longleftrightarrow
\overline{\msH}_{\!\!g,1}(\cO)$ .
\end{enumerate}
\end{thm}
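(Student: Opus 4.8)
The plan is to obtain Theorem~\ref{toad} by assembling correspondences already in hand, the common thread being that each bijection is induced by passing from a principally polarized superspecial $\msA=(A,\lambda)$ over $\oFp$ to an $\cO$-lattice carrying a hermitian form (built from the realizations of $A$ together with $\lambda$ and the standard $\lambda_0$), and then invoking Tate's theorem over $\oFp$ to convert isomorphisms of abelian varieties into isomorphisms of lattices-with-form. For part~(b) there is, strictly, nothing new: composing the bijection of Proposition~\ref{radish}, $\sSgpz\leftrightarrow\overline{\msH}_{\!\!g,1}(\cO)$, with that of Theorem~\ref{thrush} for $\HH=\HH_p$, $\overline{\msH}_{\!\!g,1}(\cO)\leftrightarrow\msP_g(\cO)$, yields the chain of part~(b), and the triple coset on the right of part~(a) is literally the definition~\eqref{mallard} of $\msP_g(\cO)$ for $\HH=\HH_p$, $\cO_\HH=\cO$. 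The hermitian-matrix end of~(b) is then Theorem~\ref{redhawk}(a)--(b) specialized to reduced degree $d=1$.

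Consequently part~(a) for $g>1$ is just the tail of that chain, and it remains to settle $g=1$, which is Remark~\ref{cow}: there $\GU_1(B)$ of a quaternion algebra $B$ with its main involution and reduced norm equals $B^\times$ (an element has similitude factor a unit of the fixed ring exactly when its reduced norm is such a unit), so $\msP_1(\cO)=\HH_p^\times\backslash\hat{\HH}_p^\times/\cO_{\hat{\HH}_p}^\times$ is the ideal class set of $\cO$, and the Deuring correspondence $E'\mapsto[\mathrm{Hom}(E,E')]$ identifies it with $\sS_{\!1}(p)_0$; here one uses that a supersingular elliptic curve has a unique principal polarization, so $\sS_{\!1}(p)_0$ is indeed the set of supersingular $j$-invariants. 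To exhibit $g=1$ and $g>1$ as a single statement I would describe the bijection $\sSgpz\leftrightarrow\msP_g(\cO)$ adelically in all cases: fix the base point $\msA_0=(E^g,\lambda_0)$; every $[\msA]\in\sSgpz$ is isogenous to $\msA_0$ compatibly with the polarizations up to a similitude factor (for $g>1$ even $A\cong E^g$, by Theorem~\ref{DOS}); sending $\msA$ to the right $\cO$-lattice $\mathrm{Hom}(A,E^g)$ equipped with the hermitian form induced by $\lambda$ and $\lambda_0$, completed at all finite places (with the crystalline realization at $p$), produces a well-defined class in $\GU_g(\HH_p)\backslash\GU_g(\hat{\HH}_p)/\GU_g(\cO_{\hat{\HH}_p})$; Tate's theorem over $\oFp$ gives injectivity, and the quotient construction of abelian varieties (cf.\ \cite[\S23]{m}) gives surjectivity.

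The main obstacle, if one insists on the uniform adelic argument rather than citing Theorem~\ref{thrush} and Remark~\ref{cow} as black boxes, is the prime $p$: the crystalline (Dieudonn\'{e}) realization at $p$ must be carried along on the same footing as the $\ell$-adic realizations for $\ell\neq p$, which forces one to prove that a superspecial $p$-divisible group with a principal polarization is unique---equivalently, that $\GU_g$ over the division quaternion algebra $\HH_p\otimes\Q_p$ has a single class of self-dual lattices relative to the maximal order, so the $p$-component of the triple coset is trivial and does not distort the count. This is exactly where ``superspecial'' is used rather than merely ``supersingular''. If one is instead content to invoke the prior results, Theorem~\ref{toad} is formal: the genuine work has been isolated in Theorem~\ref{thrush}, proved via strong approximation for the quaternionic unitary group in \cite[\S2]{jz}, and in the classical Deuring correspondence behind Remark~\ref{cow}.
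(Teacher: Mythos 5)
Your proposal is correct and follows essentially the same route as the paper: part (b) and the $g>1$ case of part (a) are obtained by composing Proposition \ref{radish} with Theorem \ref{thrush} (with the right-hand side of (a) being the definition \eqref{mallard}), and the $g=1$ case is Remark \ref{cow} together with the classical identification of ideal classes with supersingular curves. Your additional sketch of a uniform adelic argument, including the role of the $p$-component and superspeciality, is consistent with how the full proof is carried out in \cite{jz} but is not needed beyond the cited results.
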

\noindent In particular, with $h_g(p):=\#\sS_{\!g}(p)_0$ as in \S 1, we 
have $h_g(p)=h_g(\HH_p)$.

\section{Brandt matrices}
\label{dog}

Let $h=h_g(\HH)$ and $\msP_g(\cO_\HH)=\{[L_1],\ldots ,[L_h]\}$
with $[L_i]$ the class defined by the principally polarized right
$\cO_{\HH}$-module $L_i$ as in \eqref{mallard}.
For $1\leq j\leq h$, set 
\[
e_{g}(j)=\#\{U\in\GU_g(\HH) \mid L_j=UL_j\}.
\]
\begin{dfn}
\label{goose}
For $n\geq 1$ define the {\em Brandt matrix} $B_g(n)\in
\Mat_{h\times h}(\Z)$ by
\[
B_g(n)_{ij}=\frac{\#\{U\in\GU_g(\HH)\mid [L_i:UL_j]=n^{2g}\}}{e_g(j)}
\]
and define $B_g(0)_{ij}=1/e_g(j)$.
\end{dfn}

Suppose $g=1$.  
The class number $h_1(\HH)$ in \eqref{fish} is 
the usual class number $h=h(\HH)$ of
the quaternion algebra $\HH$ by Remark \ref{cow}.
The principally polarized right $\cO_\HH$-modules
$L_1$, $L_2$, \ldots, $L_h$ can be identified with representatives
 $I_1$, $I_2$, \ldots, $I_h$
for the right $\cO_\HH$-ideal classes.
The norm $\Nm(I)$ of a (right or left) fractional $\cO_\HH$-ideal 
is the positive
rational number generating the fractional ideal of $\Q$ generated
by $\{\Nm(\alpha)\mid \alpha\in I\}$.
Let $\cO_i$ be the left order of the right $\cO_\HH$-ideal $I_i$.
Then $e(i)=e_1(i)=\#\cO_i^\times$.  We thus have
\[
B(n)_{ij}=B_1(n)_{ij}=\frac{\#\{\lambda\in I_iI_j^{-1}\mid \Nm(\lambda)=
n\Nm(I_iI_j^{-1})\}}{e(j)}
\]
and $B(0)_{ij}=1/e(j)$, which
is precisely the classical definition of Brandt matrices for
a rational definite quaternion algebra.

The Brandt matrices $B_g(n)$ for a maximal order $\cO_\HH\subseteq\HH$
are amenable to machine computation, although the memory requirements
rapidly grow with $n$ and especially $g$ so that 
few examples are accessible
with $g=3$.  We had no computations finish for $g\geq 4$.
As an example, 
take $\HH=\HH_5$, the rational definite quaternion algebra of discriminant $5$.
The first class numbers of $\HH_5$ are: $h_1(\HH_5)=1$,
$h_2(\HH_5 )=2$, $h_3(\HH_5)=3$. The Brandt matrix $B_g(\ell)$
has constant row-sum $\prod_{k=1}^g(1+\ell^k)$.
Brandt matrices for $\HH_5$ with $g=1,\,2,\,3$ are given
in Table 1, where ? means the computation did not finish.
\begin{center}
\begin{table}[h]
\label{foggy1}
\begin{tabular}{l c c c c}
   &  $\bm{B_g(2)}$ &  $\bm{B_g(3)}$ & 
$\bm{B_g(7)}$ &  $\bm{B_g(11)}$\\[.15in]
$\bm{g=1}$\hspace*{.2in} & [3] & [4] & [8] & [12] \\[.2in]
$\bm{g=2}$ & $\begin{bmatrix} \,12\, & \,3 \,\\ \,10\, & \,5\,
\end{bmatrix}$ &
 $\begin{bmatrix} \,34\, & \,6\, \\ \,20\, & \,20\,\end{bmatrix}$ &
\!\quad$\begin{bmatrix} \,322\, & \,78\, \\ \,260\, & \,140\,\end{bmatrix}$
\!\quad &
\!\quad$\begin{bmatrix} \,1164\, & \,300\, \\ \,1000\, & \,464\,\end{bmatrix}$
\!\quad\\[.3in]
$\bm{g=3}$ &  $\!\!\!\!\quad \begin{bmatrix} \,54\, & \,27\, & \,54\, \\ \,30\, 
& \,15\, & \,90\,\\
\,14\, & \,21\, & \,100\,\end{bmatrix}\!\!\!\!\quad $ &
$\!\!\!\!\quad\begin{bmatrix} \,292\, & \,180\, & \,648\, \\ \,200\, & 
\,200\, & \,720\,\\
\,168\, & \,168\, & \,784\,\end{bmatrix}\!\!\!\!\quad$ & \text{\Large ?} &   \text{\Large ?}
\end{tabular}\\[.25in]
\caption{\label{bt}Brandt matrices $B_g(\ell)$ for $\HH_5$}
\end{table}
\end{center}

\section{The big, little, and enhanced isogeny graphs}
\label{cat}

We will consider $(\ell)^g$-isogenies of principally polarized
superspecial abelian varieties in charteristic $p$ with $p\neq \ell$; see 
Section 1 for the definitions.  As discussed there,
there are {\em three}
natural graphs constructed from superspecial
abelian variety isogenies---the big isogeny graph $\GR_{\!g}(\ell,p)$,
the little isogeny graph $\gr_{\!g}(\ell,p)$, and the enhanced
isogeny graph $\wgr_{\!g}(\ell,p)$.
The different graphs arise depending on how isogenies and
polarizations are identified.  
Big, little, and enhanced isogeny graphs have subtly different
properties, so we need to be careful with the definitions...

\begin{dfn}
\label{roar}
{\rm 
 A {\sf graph}
 $\Grr$ has a set of  vertices $\Ver(\Grr)=\{v_1,\ldots,v_s\}$
and a set of (directed) edges
$\Ed(\Grr)$. And edge $e\in\Ed(\Grr)$ has initial vertex $o(e)$ and
terminal vertex $t(e)$.
For $v_i, v_j\in\Ver(\Grr)$, put
\[
\Ed(\Grr)_{ij}=\{e\in\Ed(\Grr)\mid o(e)=v_i\text{ and }
t(e)=v_j\}.
\]
The
{\sf adjacency matrix} 
$\Ad(\Grr)\in\Mat_{s\times s}(\Z)$ of $\Grr$ is defined as
\[
\Ad(\Grr)_{ij}=\#
\Ed(\Grr)_{ij}.
\]

We place no further restrictions on our definition of a graph.
Serre \cite{s} requires graphs to be {\sf graphs with opposites}:
every directed edge $e\in\Ed(\Grr)$ has an {\sf opposite}
edge $\overline{e}\in\Ed(\Grr)$ with $\overline{\overline{e}}=e$.
An edge $e$ with $\overline{e}=e$
is called a {\sf half-edge}. Serre forbids half-edges; we will call a graph
satisfying his requirements a {\sf graph without half-edges}.
Kurihara \cite{k} relaxes Serre's definition to allow
half-edges giving the notion of a {\sf graph with half-edges}.
(A graph with half-edges may have $\emptyset$ as its set of half-edges,
so every graph without half-edges is a graph with half-edges.)
Following  \cite{k}, if $\Grr$ is a graph with half-edges, $\Grr^\ast$
is the graph with the half-edges removed.

A {\sf graph with weights}  is a graph $\Grr$ with opposites together with
 a weight function
$\w:\Ver(\Grr)\cup\Ed(\Grr)\rightarrow \N$ 
satisfying $\w(e)=\w(\overline{e})$
and $\w(e)|\w(o(e))$ for each edge $e$. The
{\sf weighted adjacency matrix} $\Adw(\Grr)$ of a graph with weights
 $\Grr$ is
\begin{equation}
\label{ritual}
\Adw(\Grr)_{ij}=\sum_{e\in\Ed(\Grr)_{ij}}\frac{\w(v_i)}{\w(e)}.
\end{equation}
Following \cite[\S 3]{k}, 
a {\sf graph with lengths} is a graph $\Grr$ with opposites together
with a length function $f:\Ed(\Grr)\rightarrow \N$ satisfying
$f(e)=f(\overline{e})$ for $e\in\Ed(\Grr)$. A graph with weights
determines a graph with lengths by taking the length of an edge to
be its weight.  If $\Grr$ is a graph with weights or lengths, then
$\Grr^\ast$ inherits weights or lengths, respectively, from $\Grr$.
}
\end{dfn}
\subsection{The big isogeny graph $\GR\colonequals \GR_{\!g}(\ell,p)$}
\label{chick}

The big isogeny graph $\GR=\GR_{\!g}(\ell,p)$ was defined in 
Definition \ref{wine}; this is the usual
``isogeny graph''.   In particular, 
$\Ver(\Gr)=\sSgpz$, so $\#\Ver(\Gr)=h=h_g(p)$. We have
\[
\Ed(\Gr)_{ij}=\{C\in\Iso_\ell(\msA_i)\mid [\msA_i/C]=[\msA_j]\} 
\]
with $\Iso_\ell(\msA)$ as in \eqref{whole}.
The adjacency matrix
$\Ad(\Gr)$ is a constant row-sum matrix as in \eqref{now}:
\[
\sum_{j=1}^h\Ad(\Gr)_{ij}=\prod_{k=1}^g(\ell^k+1) .
\]
It is in fact a familiar matrix:
\begin{thm}
Let $B_g(\ell)$ be the Brandt matrix for $\cO\subseteq \HH_p$.
Then $\Ad(\Gr_{\!g}(\ell,p))=B_g(\ell)$.
\end{thm}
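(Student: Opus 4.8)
The plan is to identify both matrices with the same combinatorial quantity---the number of $(\ell)^g$-isogenies between fixed representatives---by passing through the adelic description of $\sSgpz$ in Theorem \ref{toad}. First I would unwind the left-hand side: by Definition \ref{wine}, the $(i,j)$ entry $\Ad(\GR)_{ij}$ counts maximal isotropic subgroups $C\subseteq A_i[\ell]$ with $\msA_i/C\cong\msA_j$. Using the Proposition following Definition \ref{radio}, each such $C$ corresponds to an $(\ell)^g$-isogeny $\psi_C\colon\msA_i\to\msA_i/C$ of polarized abelian varieties, so $\Ad(\GR)_{ij}$ counts $(\ell)^g$-isogenies $\msA_i\to\msA_j$ (up to the target polarization being fixed, not just isomorphic---I will need to be careful that the count is of subgroups $C$, equivalently of isogenies with a chosen identification of the codomain with $\msA_j$, which is exactly what makes the row sums equal $\prod_k(\ell^k+1)$ rather than something smaller).

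Next I would translate this into the language of principally polarized right $\cO$-modules (here $\cO=\cO_E\subseteq\HH_p$) via Theorem \ref{toad}(a): a vertex $[\msA_j]$ corresponds to a class $[L_j]\in\msP_g(\cO)=\GU_g(\HH_p)\backslash\GU_g(\hat\HH_p)/\GU_g(\cO_{\hat\HH_p})$. Under this dictionary, an $(\ell)^g$-isogeny $\msA_i\to\msA'$ with $\msA'\cong\msA_j$ corresponds to an inclusion $L'\subseteq L_i$ of principally polarized modules with index $[L_i:L']=\ell^{2g}$ (the factor $2g$ because the reduced degree conventions in Remark \ref{mars} give $\mathrm{rdeg}(\ell\lambda)=\ell^g\mathrm{rdeg}(\lambda)$, hence actual degree $\ell^{2g}$), together with an identification of $L'$ with $L_j$ up to the stabilizer. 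Concretely, after choosing $U\in\GU_g(\HH_p)$ with $UL_j\subseteq L_i$, the count becomes $\#\{U\in\GU_g(\HH_p)\mid [L_i:UL_j]=\ell^{2g}\}$ divided by $e_g(j)=\#\{U\in\GU_g(\HH_p)\mid L_j=UL_j\}$, because two such $U$ give the same submodule (equivalently the same isotropic $C$) precisely when they differ by an element of that stabilizer. But this is exactly $B_g(\ell)_{ij}$ by Definition \ref{goose} with $n=\ell$.

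The main obstacle is the precise bookkeeping in the passage from isotropic subgroups $C$ to submodules $UL_j$: one must check that (i) every maximal isotropic $C\subseteq A_i[\ell]$ does arise from a principally polarized submodule of index $\ell^{2g}$ (isotropy with respect to the Weil pairing is what guarantees the quotient polarization descends and is again principal, matching unimodularity of the induced hermitian form), (ii) the identification $[\msA_i/C]=[\msA_j]$ corresponds to the $\GU_g(\HH_p)$-equivalence hidden in the coset description and not to a finer or coarser relation, and (iii) the $e_g(j)$ in the denominator of the Brandt matrix is exactly the over-count incurred when we parametrize submodules by group elements $U$ rather than by the submodules themselves. All three points are essentially the $g>1$ analogue of the classical $g=1$ bijection between $\ell$-isogenies of supersingular elliptic curves and index-$\ell$ sub-right-ideals, spelled out at the end of Section \ref{dog}; I would present the argument uniformly in $g\geq 1$ using Theorem \ref{toad}(a), which holds in all dimensions, rather than the hermitian-matrix reformulation of part (b). With these three compatibilities in hand, the equality $\Ad(\GR_{\!g}(\ell,p))_{ij}=B_g(\ell)_{ij}$ holds entrywise, and the agreement of row sums $\prod_{k=1}^g(\ell^k+1)$ from \eqref{now} and from Definition \ref{goose} serves as a consistency check on the normalization.
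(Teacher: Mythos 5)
Your strategy is the right one and coincides with the route the paper itself points to: the paper states this theorem without proof (deferring to \cite{jz}), and the intended argument is exactly the dictionary you describe, matching maximal isotropic $C\subseteq A_i[\ell]$ with principally polarized submodules $UL_j\subseteq L_i$ of index $\ell^{2g}$ via Theorem \ref{toad}(a), with $e_g(j)$ absorbing the overcount from parametrizing submodules by elements $U\in\GU_g(\HH_p)$ rather than by the submodules themselves. Be aware, though, that your items (i)--(iii) are not routine bookkeeping but the entire content of the proof---in particular (i), identifying $A_i[\ell]$ with $L_i/\ell L_i$ so that the Weil pairing corresponds to the reduction of the hermitian form and maximal isotropy corresponds to unimodularity of the induced form on the submodule, is where Theorem \ref{redhawk} and the structure $\End(E^g)=\Mat_{g\times g}(\cO)$ must actually be invoked---so as written this is a correct plan rather than a complete proof.
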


The adjacency matrix $\Ad(\GR)=B_g(\ell)$ is not in general
symmetric, so $\GR$ cannot be a graph with opposites.  In particular, 
taking the dual isogeny does {\em not} give a well-defined involution
on $\Ed(\Gr)$, so $\Gr$ is {\em not} a graph with opposites via dual 
isogenies.

\subsection{The little isogeny graph $\gr\colonequals\gr_{\!g}(\ell,p)$}
\label{sunny}

The little $(\ell)^g$-isogeny graph $\gr=\gr_{\!g}(\ell,p)$
has vertices
\[
\Ver(\gr)=\sSgpz, 
\]
so $\Ver(\gr)=\Ver(\GR)$ and 
$\#\Ver(\gr)=h=h_g(p)$.
If $[\msA]\in\sS_g(p)_0$ and $C,C'\in\Iso_\ell(\msA)$,
say $C\sim C'$ if there exists $\alpha\in\Aut(\msA)$ such that
$\alpha C=C'$. The class $[C]\in\iso_\ell(\msA)\colonequals\Iso_\ell(\msA)/\sim$ is defined by $C\in\Iso_\ell(\msA)$.  We put
\[
\Ed(\gr)_{ij}=\{[C]\in \iso_\ell(\msA_i)  \mid [\msA_i/C]=[\msA_j]\}.
\]

Unlike the big isogeny graph, the little isogeny graph $\gr$
{\em is} a graph with opposites:
the dual isogeny gives a well-defined involution on $\Ed(\gr)$.
In general we have edges $e\in\Ed(\gr)$ with $\overline{e}=e$, so
$\gr$ is a graph with half-edges.  Beyond this, 
$\gr$ is a graph with weights: set $\w([\msA])=\#\Aut(\msA)$ and
$\w([C])=\#\Aut(A,\lambda,C)$ for a vertex corresponding to 
$[\msA=(A,\lambda)]\in
\sS_g(p)_0$ and the edge emanating from that vertex corresponding
to $[C]\in\iso_\ell(\msA)$. The weighted adjacency matrix \eqref{ritual}
 of the
little isogeny graph is the Brandt matrix:
\begin{thm}
$\Adw(\gr_{\!g}(\ell,p))=\Ad(\Gr_{\!g}(\ell,p))=B_g(\ell)$.
\end{thm}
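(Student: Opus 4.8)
The plan is to prove the substantive equality $\Adw(\gr_{\!g}(\ell,p)) = \Ad(\GR_{\!g}(\ell,p))$ by an orbit-counting argument; the remaining equality $\Ad(\GR_{\!g}(\ell,p)) = B_g(\ell)$ is the theorem stated immediately above. Fix vertices $[\msA_i], [\msA_j] \in \sSgpz$, write $\msA_i = (A_i,\lambda_i)$, and let $G_i = \Aut(\msA_i) = \Aut(A_i,\lambda_i)$, a finite group. First I would check that $G_i$ acts on $S_{ij} := \{C \in \Iso_\ell(\msA_i) \mid [\msA_i/C] = [\msA_j]\}$ by $\alpha\cdot C = \alpha(C)$. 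Since $\alpha$ preserves $\lambda_i$, it preserves the Weil pairing $\langle\,,\,\rangle_{\lambda_i,\ell}$, hence carries maximal isotropic subgroups of $A_i[\ell]$ to maximal isotropic subgroups; and $\alpha : A_i \to A_i$ descends to an isomorphism $\bar\alpha : A_i/C \xrightarrow{\ \sim\ } A_i/\alpha(C)$. That $\bar\alpha$ respects the induced principal polarizations — so that $[\msA_i/\alpha(C)] = [\msA_i/C]$ — follows from the quotient-polarization Proposition of Section 1 preceding Definition \ref{wine}: with $\psi_{\alpha(C)}\circ\alpha = \bar\alpha\circ\psi_C$ and the defining property $\psi_C^*\lambda_C = \ell\lambda_i$ (and likewise for $\alpha(C)$), one gets $\psi_C^*(\bar\alpha^*\lambda_{\alpha(C)}) = \ell\lambda_i = \psi_C^*\lambda_C$, and $\psi_C^*$ is injective on Hom-groups since $\psi_C$ is an isogeny, so $\bar\alpha^*\lambda_{\alpha(C)} = \lambda_C$. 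The same computation shows that ``$[\msA_i/C]$'' depends only on the class $[C]\in\iso_\ell(\msA_i)$, so $\Ed(\gr)_{ij}$ is well-defined and $[C]\mapsto G_i\!\cdot\!C$ is a bijection from $\Ed(\gr)_{ij}$ onto the set of $G_i$-orbits in $S_{ij}$.

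Next I would apply orbit--stabilizer. For $C\in S_{ij}$ the stabilizer of $C$ in $G_i$ is $\{\alpha\in\Aut(A_i,\lambda_i)\mid\alpha(C)=C\} = \Aut(A_i,\lambda_i,C)$, of order $\w([C])$; in particular $\w([C])$ divides $\#G_i = \#\Aut(\msA_i) = \w(v_i)$, as a graph with weights requires, and $\#(G_i\!\cdot\!C) = \w(v_i)/\w([C])$. Partitioning $S_{ij}$ into $G_i$-orbits and summing cardinalities gives
\[
\Ad(\GR)_{ij} = \#\Ed(\GR)_{ij} = \#S_{ij} = \sum_{[C]\in\Ed(\gr)_{ij}}\frac{\w(v_i)}{\w([C])} = \Adw(\gr)_{ij}.
\]
Since $i,j$ were arbitrary, $\Adw(\gr_{\!g}(\ell,p)) = \Ad(\GR_{\!g}(\ell,p))$, and combining with $\Ad(\GR_{\!g}(\ell,p)) = B_g(\ell)$ finishes the proof. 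The diagonal case $i=j$ (half-edges) needs no separate treatment, being already covered by the same orbit count.

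The routine-but-delicate point — the one I would be most careful with — is the functoriality claim in the first paragraph: that an automorphism of the \emph{polarized} variety $\msA_i$ induces an isomorphism of the \emph{principally polarized} quotients $\msA_i/C$ and $\msA_i/\alpha(C)$, not merely of the underlying abelian varieties. Everything hinges on the characterization of the quotient polarization by $\psi_C^*\lambda_C = \ell\lambda_i$ and on injectivity of pullback along an isogeny; once that is pinned down, the rest is bookkeeping with orbit--stabilizer. One could instead prove $\Adw(\gr) = B_g(\ell)$ directly via the adelic lattice computation underlying the $\Ad(\GR)=B_g(\ell)$ theorem, but the comparison above is shorter given what is already available.
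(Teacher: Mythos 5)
Your argument is correct, and it is the expected one: this paper states the theorem without proof (deferring to \cite{jz}), and the identity $\Adw(\gr_{\!g}(\ell,p))=\Ad(\Gr_{\!g}(\ell,p))$ is precisely the orbit--stabilizer comparison you give, with $\Ed(\gr)_{ij}$ the set of $\Aut(\msA_i)$-orbits on $\Ed(\Gr)_{ij}$ and each orbit of size $\w(v_i)/\w([C])$. You also correctly isolate and settle the one delicate point --- that $[\msA_i/C]$ depends only on the orbit of $C$, via uniqueness of the quotient polarization characterized by $\psi_C^\ast\lambda'=\ell\lambda_i$ and injectivity of pullback along an isogeny --- so nothing is missing.
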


\subsection{The enhanced isogeny graph $\wgr\colonequals\wgr_{\!g}(\ell,p)$}

Recall the notation \eqref{rent}:
\[
\sSgpz=\{[\msA_1],\ldots , [\msA_h]\}\equalscolon\{v_1,\ldots , v_h\}.
\]
Suppose $[\msA=(A,\lambda)]\in\sSgpz$. Let 
$\ell\msA\colonequals (A, \ell \lambda)$, a $g$-dimensional superspecial
abelian variety with $\ell$ times a principal polarization (which
we call an $[\ell]$-polarization of type $g$ in \cite{jz}). Set
\[
\sSgpg = \{[\ell\msA_1],\ldots, [\ell \msA_h]\}\equalscolon
\{v_{h+1},\ldots, v_{2h}\}.
\]
Define
the $[\ell]$-dual $\hat{\msA}=(\hat{A},[\lambda])$ of
$[\msA]\in\sSgpz\coprod\sSgpg$ by requiring that the composition
\[
[\lambda]\circ \lambda:A\stackrel{\lambda}{\longrightarrow}
\hat{A}\stackrel{[\lambda]}
{\longrightarrow}A
\]
from $A$ to itself is multiplication by $\ell$.  This $[\ell]$-dual
construction interchanges type $0$ (principal polarizations) and 
type $g$: If 
$[\msA]\in\sSgpz$, then $[\hat{\msA}]\in\sSgpg$; and if
$[\msA]\in\sSgpg$, then $ [\hat{\msA}]\in\sSgpz$.

We can now define the enhanced isogeny graph
$\wgr\colonequals \wgr_{\!g}(\ell, p)$:
\begin{dfn}
\label{halibut}
{\rm
The vertices of $\wgr=\wgr_{\!g}(\ell,p)$ are
\[
\Ver(\wgr)=\sSgpz\coprod \sSgpg =\{v_1,\ldots , v_h\}\coprod \{v_{h+1}, \ldots
, v_{2h}\}.
\] 
Hence $\#\Ver(\wgr)=2h=2h_g(p)$.

The edges connecting the vertex $v_{h+i}=[\ell\msA_i]\in\sSgpg$ 
to the vertex $v_j=[\msA_j]\in\sSgpz$ are
\[
\Ed(\wgr)_{h+i,j}=\{[C]\in\iso_\ell(\msA_i) \mid [\msA_i/C]=[\msA_j]\}
\]
with $\iso_\ell(\msA)$ as in \S \ref{sunny}.
For $v_i=[\msA_i]\in\sSgpz$ and $v_{h+j}=[\ell\msA_j]\in\sSgpg$,
\[
\Ed(\wgr)_{i,h+j}=\{[\hat{C}]\in\iso_\ell(\hat{\msA}_i) 
\mid [\hat{\msA}_i/\hat{C}]=[\hat{\msA}_j]\}.
\]
In case $1\leq i,j\leq h$ or $h+1\leq i,j\leq 2h$,  $\Ed(\wgr)_{ij}=\emptyset$.

The enhanced isogeny graph $\wgr$ is a graph with opposites:
If $e\in\Ed(\wgr)_{ij}$,
the opposite edge $\overline{e}\in\Ed(\wgr)_{ji}$ is the equivalence
class of the dual isogeny. We never have $\overline{e}=e$, so $\wgr$ is 
a graph without half-edges.  The graph $\wgr$ is a graph with weights:
define $\w$ as the
order of automorphism group as for $\gr$.
}
\end{dfn}

\begin{thm}{\rm (a) }
The enhanced isogeny graph $\wgr$ is the bipartite double cover of
the little isogeny graph $\gr$ with inherited weights.\\[.08in]
{\rm (b)}
Let $\mathit{Ad}=\Ad(\gr)$ and $\mathit{Ad}_w =\Adw(\gr)=\Ad(\Gr)$. Then
\[
\Ad(\wgr)=\begin{bmatrix}\,0\, &\, \mathit{Ad}\,\\ 
\mathit{Ad} & 0\end{bmatrix}\,\text{ and }
\, \Adw(\wgr)=\begin{bmatrix} 0 & \mathit{Ad}_w \\ \mathit{Ad}_w
 &0\end{bmatrix}=
\begin{bmatrix} 0 & B_g(\ell)\\ B_g(\ell) & 0\end{bmatrix}.
\]
\end{thm}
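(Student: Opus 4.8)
The plan is to reduce both parts to unwinding the definitions of Section~\ref{cat}, the only non‑formal ingredient being that the $[\ell]$‑dual construction is compatible with quotients by maximal isotropic subgroups; (a) is proved first, and (b) is then read off a general formula for the bipartite double cover. Recall that for a graph with weights $\Grr$ in the sense of Definition~\ref{roar}, the bipartite double cover is $\Grr\times K_2$, where $K_2$ has two vertices $0,1$ and one pair of opposite edges $\delta\colon 0\to 1$, $\bar\delta\colon 1\to 0$: its vertices are $\Ver(\Grr)\times\{0,1\}$ with $\w(u,a)=\w(u)$, and for each directed edge $e\colon u\to v$ of $\Grr$ there is an edge $(e,\delta)\colon(u,0)\to(v,1)$ and an edge $(e,\bar\delta)\colon(u,1)\to(v,0)$, each of weight $\w(e)$, with $\overline{(e,\delta)}=(\bar e,\bar\delta)$. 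In particular, over every ordered pair of vertices both the $0\!\to\!1$ and the $1\!\to\!0$ edge sets are copies of the single set $\Ed(\Grr)_{uv}$, a half‑edge of $\Grr$ lifts to an ordinary pair of opposite edges, and the weights are inherited, so $\Grr\times K_2$ has no half‑edges.

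Now I would identify $\Ver(\wgr)=\sSgpz\coprod\sSgpg\longleftrightarrow\Ver(\gr)\times\{0,1\}$ by $v_i=[\msA_i]\mapsto([\msA_i],0)$ and $v_{h+i}=[\ell\msA_i]\mapsto([\msA_i],1)$; the vertex weights agree because $\Aut(\ell\msA_i)=\{f\in\Aut(A_i):f^\ast\lambda_i=\lambda_i\}=\Aut(\msA_i)$. The ``descending'' edges need nothing: $\Ed(\wgr)_{h+i,j}=\{[C]\in\iso_\ell(\msA_i):[\msA_i/C]=[\msA_j]\}$ is by construction $\Ed(\gr)_{ij}$, with the same edge weights $\#\Aut(A_i,\lambda_i,C)$, so it matches $\Ed(\gr\times K_2)_{([\msA_i],1),([\msA_j],0)}$. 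For the ``ascending'' edges, I would first record that the $[\ell]$‑dual is an involution of $\sSgpz\coprod\sSgpg$ interchanging the two parts (via the canonical isomorphism \eqref{venus}), and then establish a natural bijection $\iso_\ell(\msA)\xrightarrow{\ \sim\ }\iso_\ell(\hat{\msA})$, $[C]\mapsto[\hat C]$, where $\hat C$ is the kernel of the dual isogeny $\widehat{\psi_C}$ placed in the $\ell$‑torsion of the dual abelian variety, under which $[\msA/C]=[\msA']$ becomes $[\hat{\msA}/\hat C]=[\hat{\msA}']$ and $\#\Aut(A,\lambda,C)=\#\Aut(\hat A,[\lambda],\hat C)$. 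Granting this, $\Ed(\wgr)_{i,h+j}=\{[\hat C]\in\iso_\ell(\hat{\msA}_i):[\hat{\msA}_i/\hat C]=[\hat{\msA}_j]\}$ is canonically $\Ed(\gr)_{ij}=\Ed(\gr\times K_2)_{([\msA_i],0),([\msA_j],1)}$, with matching weights; and since the opposites of $\gr$ are themselves dual isogenies and $\widehat{\widehat{\psi_C}}=\psi_C$, the opposite involution of $\wgr$ sends the ascending edge of $[\hat C]$ to the descending edge of $\bar e$, where $e\in\Ed(\gr)_{ij}$ is the edge of $[C]$ — precisely the rule $\overline{(e,\delta)}=(\bar e,\bar\delta)$. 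Hence $\wgr\cong\gr\times K_2$ as graphs with weights, which is (a), and in particular $\wgr$ has no half‑edges, consistent with Definition~\ref{halibut}.

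For (b): the model above gives, for any graph with weights $\Grr$,
\[
\Ad(\Grr\times K_2)=\begin{bmatrix}0&\Ad(\Grr)\\ \Ad(\Grr)&0\end{bmatrix},\qquad
\Adw(\Grr\times K_2)=\begin{bmatrix}0&\Adw(\Grr)\\ \Adw(\Grr)&0\end{bmatrix},
\]
both off‑diagonal blocks being equal because over each ordered pair $(u,v)$ the $0\!\to\!1$ and $1\!\to\!0$ edge sets are the same copies of $\Ed(\Grr)_{uv}$ and $\w(u,a)=\w(u)$ in either direction — this uses neither that $\Ad(\gr)$ is symmetric (it is, being a graph with opposites) nor that $\Adw(\gr)$ is (it is not). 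Applying this with $\Grr=\gr$ and substituting $\Adw(\gr)=\Ad(\Gr)=B_g(\ell)$ from the theorem of \S\ref{sunny} gives (b).

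The hard step is the ascending one: the bijection $\iso_\ell(\msA)\xrightarrow{\sim}\iso_\ell(\hat{\msA})$ and its compatibility with quotients, reduced degrees, and automorphism groups. Concretely, if $\psi_C^\ast\lambda'=\ell\lambda$ presents $\msA/C=(A/C,\lambda')$, one must check that the dual isogeny $\widehat{\psi_C}\colon\widehat{A/C}\to\hat A$ has kernel inside $\widehat{A/C}[\ell]$, is again an $(\ell)^g$‑isogeny, and intertwines the $[\ell]$‑polarizations so as to exhibit the $[\ell]$‑dual of $\msA/C$ as $\hat{\msA}/\hat C$; this draws on the degree identities of Remark~\ref{mars}, the maximal‑isotropic‑quotient criterion recalled after \eqref{whole}, and the fact that $X\mapsto\hat X$ is an involutive equivalence compatible with polarizations. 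If this compatibility is already available from the treatment of the $[\ell]$‑dual, the remainder is pure bookkeeping with the definitions of Section~\ref{cat}.
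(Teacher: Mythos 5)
The paper itself gives no proof of this theorem: like the other results of \S\ref{cat} it is stated with the proof deferred to \cite{jz}, so there is no in-text argument to compare against. Your proposal is correct and is the natural unwinding of Definition \ref{halibut}: the descending edge sets $\Ed(\wgr)_{h+i,j}$ coincide verbatim with $\Ed(\gr)_{ij}$, the ascending ones are carried onto $\Ed(\gr)_{ij}$ by the $[\ell]$-dual, and the block form of both adjacency matrices then falls out; your remark that \emph{both} off-diagonal blocks equal $\Adw(\gr)$ (rather than $\Adw(\gr)$ and its transpose) is exactly the right point to stress, since $\Adw(\gr)=B_g(\ell)$ is not symmetric, and your check that $\w([\ell\msA_i])=\w([\msA_i])$ via $f^\ast(\ell\lambda)=\ell f^\ast\lambda$ is the small weight verification the statement ``with inherited weights'' requires. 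The one substantive ingredient, which you correctly isolate but only sketch, is the duality lemma: the bijection $\iso_\ell(\msA)\to\iso_\ell(\hat{\msA})$, $[C]\mapsto[\hat{C}]$, together with $\widehat{\msA/C}\cong\hat{\msA}/\hat{C}$, preservation of the maximal-isotropic condition, and equality of the edge stabilizers $\Aut(A,\lambda,C)=\Aut(\hat{A},[\lambda],\hat{C})$. That verification (via Remark \ref{mars} and the criterion following \eqref{whole}) is precisely what is carried out in \cite{jz}; granting it, the rest of your argument is complete bookkeeping and I see no gap.
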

Let $\iota:\wgr\rightarrow \wgr$ be the involution defined 
on vertices by $\iota([\msA])=[\hat{\msA}]$ and on edges such that if
$e\in\Ed(\wgr)_{ij}$ corresponds to the class $[C]$, then
$\iota(e)\in\Ed(\wgr)_{i+h, j+h}$ (where the indices are added 
$\bmod 2h$) also corresponds to the class $[C]$.
Then $\iota$ fixes no vertices and no
edges of $\wgr$ and $\wgr/\iota=\gr$.

\section{$\ell$-adic uniformization of isogeny graphs}
\label{lion}

In this section we give the uniformization of the isogeny graphs
$\gr_1(\ell,p)$ and $\wgr_1(\ell, p)$ by the Bruhat-Tits building
$\Delta=\Delta_\ell$ of $\SL_2(\Q_\ell)$, which is an $(\ell +1)$-regular
tree.  We then use this uniformization to relate 
$\gr_1(\ell ,p)$ and $\wgr_1(\ell, p)$ to the bad reduction of 
Shimura curves.

\subsection{$\ell$-adic uniformization of isogeny graphs for  $g=1$}
\label{lion1}

The quaternion algebra 
$\HH(p)\colonequals \HH_p$ is split at the prime $\ell$, so 
\[
\Gamma_0\colonequals \cO[1/\ell]^\times\hookrightarrow
\GL_2(\Q_\ell)\cong (\HH(p)\otimes_{\Q}\Q_\ell)^\times.
\]
Set
$\Gamma_1\colonequals \{\gamma\in\Gamma_0 \mid
\Nm_{\HH(p)/\Q}(\gamma)=1 \}$.  Then $\Gamma_1\backslash\Delta$
and $\Gamma_0\backslash \Delta$ are finite graphs with weights
defined by the orders of the stabilizer subgroups for the action on
$\Delta$.
Kurihara \cite{k} shows the following:
\begin{thm}[Kurihara]
\label{dream}
Let $B_1(\ell)$ the Brandt matrix at $\ell$ for the
maximal order $\cO\subseteq\HH(p)=\HH_p$.
\begin{enumerate}[\upshape (a)]
\item
\label{dream1}
$\Ad_{\w}(\Gamma_{0}\backslash\Delta)=B_1(\ell)$.
\item
\label{dream2}
The graph with weights 
$\Gamma_1\backslash\Delta$ is
the bipartite double cover of  $\Gamma_0\backslash
\Delta$.
\end{enumerate}
\end{thm}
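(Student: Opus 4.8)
\emph{Proof proposal.} The plan has two parts: a strong-approximation dictionary that turns $\Gamma_0\backslash\Delta$ into the class set of $\cO$, and an orbit count on the $\ell+1$ neighbours of a vertex. Recall that the vertices of $\Delta=\Delta_\ell$ are homothety classes $[\Lambda]$ of $\Z_\ell$-lattices in $\Q_\ell^2$, that the $\ell+1$ neighbours of $[\Lambda]$ are the index-$\ell$ sublattices of $\Lambda$ (the lines in $\Lambda/\ell\Lambda$), and that $\GL_2(\Q_\ell)$ acts transitively on vertices with the stabiliser of the base vertex equal to $\Q_\ell^\times\GL_2(\Z_\ell)$. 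Fixing $\HH(p)\otimes\Q_\ell\cong M_2(\Q_\ell)$, regard $\Gamma_0=\cO[1/\ell]^\times$ inside $\GL_2(\Q_\ell)$ acting on $\Delta$. To a vertex $v=[\Lambda]$ attach the right $\cO$-ideal $I_v$ with $(I_v)_\ell=\Lambda$ and $(I_v)_q=\cO_q$ for $q\neq\ell$; since every $\gamma\in\Gamma_0$ is a unit at each $q\neq\ell$ one has $\gamma I_v=I_{\gamma v}$, so $v\mapsto[I_v]$ descends to $\Gamma_0\backslash\Delta^{(0)}$. Injectivity of the resulting map to ideal classes is formal ($\gamma\cO_q=\cO_q$ forces $\gamma\in\cO_q^\times$); surjectivity — every right $\cO$-ideal class has a representative trivial away from $\ell$ — is strong approximation for $\operatorname{SL}_1(\HH(p))$, valid because $\HH(p)\otimes\Q_\ell$ is split so $\operatorname{SL}_1(\HH(p))(\Q_\ell)$ is noncompact. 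Thus $\Ver(\Gamma_0\backslash\Delta)$ is the set $\{[I_1],\dots,[I_h]\}$ of right $\cO$-ideal classes, i.e.\ $\sS_{\!1}(p)_0$ by Remark~\ref{cow}; and the stabiliser in $\Gamma_0$ of a lift $\tilde v_i$ of $[I_i]$ is $\Z[1/\ell]^\times\cdot\cO_i^\times$ with $\cO_i=\cO_\ell(I_i)$ the left order, so that vertex carries weight $e_1(i)=\#\cO_i^\times$ in the normalisation of $\S$\ref{dog}.

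For part (a): with $\tilde v_i$ fixed, $\operatorname{Stab}_{\Gamma_0}(\tilde v_i)$ acts on the set $N$ of the $\ell+1$ neighbours, an edge of $\Gamma_0\backslash\Delta$ from $v_i$ to $v_j$ is an orbit on $N$ over $v_j$, and orbit--stabiliser gives that its size is $\w(v_i)/\w(e)$; summing, $\Ad_{\w}(\Gamma_0\backslash\Delta)_{ij}=\sum_e\w(v_i)/\w(e)$ is the number of $w\in N$ with $[I_w]=[I_j]$ (independent of the lift, with the usual half-edge bookkeeping when $i=j$). Translating: a neighbour $w$ of $\tilde v_i$ gives a right $\cO$-ideal $I_w\subseteq I_i$, trivial away from $\ell$, with $\Nm(I_w)=\ell\,\Nm(I_i)$, and $[I_w]=[I_j]$ means $I_w=\gamma I_j$, which forces $\gamma\in I_iI_j^{-1}$ with $\Nm(\gamma)=\ell\,\Nm(I_iI_j^{-1})$; the map $\gamma\mapsto\gamma I_j$ is a bijection from such $\gamma$, modulo the free right action of $\cO_j^\times$, onto these neighbours. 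Hence $\Ad_{\w}(\Gamma_0\backslash\Delta)_{ij}=\#\{\gamma\in I_iI_j^{-1}:\Nm\gamma=\ell\,\Nm(I_iI_j^{-1})\}/e_1(j)$, which is $B_1(\ell)_{ij}$ of Definition~\ref{goose} --- the classical Brandt matrix entry recalled in $\S$\ref{dog}.

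For part (b): $\Delta$ is bipartite, the colour of $[\Lambda]$ being $v_\ell(\det)\bmod 2$ against the base vertex, and $g\in\GL_2(\Q_\ell)$ preserves colours iff $v_\ell(\det g)$ is even; as $\Gamma_1\subset\operatorname{SL}_2(\Q_\ell)$ it preserves colours, so $\Gamma_1\backslash\Delta$ is bipartite. Modulo the central scalars acting trivially on $\Delta$, the index $[\Gamma_0:\Gamma_1]$ equals $[\Nm(\Gamma_0):\ell^{2\Z}]$, which divides $2$ and equals $2$ exactly when $\ell$ lies in $\Nm(\cO[1/\ell]^\times)$; I would prove that it does by strong approximation: take $\pi_0\in\HH(p)^\times$ with $\Nm\pi_0=\ell$ (Hasse--Schilling, $\HH(p)$ ramified at $\infty$), note $\pi_0\in\cO_p^\times$ and that for each $q\neq\ell$ some $s_q\in\operatorname{SL}_1(\HH(p))(\Q_q)$ moves $\pi_0$ into $\cO_q^\times$ (as $v_q(\det\pi_0)=0$), and approximate $(s_q)_{q\neq\ell}$ by a global $s\in\operatorname{SL}_1(\HH(p))(\Q)$ to obtain $\pi=\pi_0 s\in\cO[1/\ell]^\times$ of reduced norm $\ell$. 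Then the degree-$2$ covering $\Gamma_1\backslash\Delta\to\Gamma_0\backslash\Delta$ has deck group generated by a colour-swapping element, which is precisely to say $\Gamma_1\backslash\Delta$ is the bipartite double cover of $\Gamma_0\backslash\Delta$; and the weights are inherited, since any element of $\Gamma_0$ fixing a vertex or an edge preserves colours and hence lies in $\Gamma_1$ (in particular $\Gamma_1$ inverts no edge, matching that a bipartite double cover has no half-edges).

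\emph{Main obstacle.} I expect the real work to be the bookkeeping in part (a): matching, with no stray factors, the tree-neighbour count, the orbit--stabiliser weights (including central $\pm1$'s and powers of $\ell$), the half-edge conventions on the diagonal, and the $1/e_1(j)$ normalisation of Definition~\ref{goose}. The two genuinely arithmetic inputs --- surjectivity of $\Gamma_0\backslash\Delta^{(0)}\to\{\text{ideal classes}\}$, and $\ell\in\Nm(\cO[1/\ell]^\times)$ in part (b) --- are where one invokes rather than computes, but both are direct consequences of strong approximation for $\operatorname{SL}_1(\HH(p))$.
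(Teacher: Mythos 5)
The paper does not actually prove this theorem: its ``proof'' consists of two page references to Kurihara's 1979 paper (\cite[pp.~294, 296]{k}), so there is no argument in the text to compare yours against line by line. That said, your proposal is a correct reconstruction of exactly the argument Kurihara gives and that this circle of ideas always uses: strong approximation for $\SL_1(\HH(p))$ identifies $\Gamma_0\backslash\Delta^{(0)}$ with the right $\cO$-ideal classes supported at $\ell$; the orbit--stabilizer computation converts $\Adw(\Gamma_0\backslash\Delta)_{ij}$ into a count of neighbours of a lift of $v_i$ lying over $v_j$; the Morita dictionary between index-$\ell$ sublattices of $\Lambda_i$ and integral right ideals $\gamma I_j\subseteq I_i$ of reduced norm $\ell\Nm(I_i)$, modulo $\cO_j^\times$, turns that count into the classical Brandt entry of \S\ref{dog}; and bipartiteness of $\Delta$ via the parity of $v_\ell(\Nm)$, together with the existence of an element of $\cO[1/\ell]^\times$ of reduced norm $\ell$, gives (b). You have also correctly located the two arithmetic inputs (both instances of strong approximation) and the genuine danger zone, namely the normalization of the weights.

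Two small points of care on that bookkeeping, since you flagged it yourself. First, the ``orders of the stabilizer subgroups'' in the paper must be read in $\overline{\Gamma}_0\subset\PGL_2(\Q_\ell)$ (or, equivalently, all weights must uniformly include the central $\pm 1$): the stabilizer of a vertex in $\Gamma_0$ itself is $\Z[1/\ell]^\times\cdot\cO_i^\times$ and is infinite, and modulo scalars it has order $e_1(i)/2$ rather than $e_1(i)$. Only the ratios $\w(v_i)/\w(e)$ enter $\Adw$, so either convention gives the same matrix provided vertex and edge weights are normalized the same way; this is precisely why your final count lands on $\#\{\gamma\}/e_1(j)$ with the full $e_1(j)=\#\cO_j^\times$. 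Second, in (b) the statement that the degree-two cover is \emph{the} bipartite double cover needs connectedness of $\Gamma_1\backslash\Delta$ (immediate, since $\Delta$ is a tree) and the observation that every edge of $\Delta$ reverses colour, so that the index-two subgroup $\overline{\Gamma}_1\subseteq\overline{\Gamma}_0$ is exactly the kernel of the length-parity character; your remark that edge- and vertex-stabilizers lie in $\Gamma_1$, while edge-inverting elements do not, is the right way to see that half-edges of $\gr_{\!1}(\ell,p)$ open up into genuine edges upstairs. With those points made explicit, your outline is a complete proof of what the paper only cites.
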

\begin{proof}
\eqref{dream1}: \cite[p.~294]{k}.\\
\eqref{dream2}: \cite[p.~296]{k}.
\end{proof}
In \cite[\S 9]{jz}, we prove the following:
\begin{thm}
\label{dock}
\begin{enumerate}[\upshape (a)]
\item
\label{dock1}
$\gr_{\!1}(\ell,p)\cong\Gamma_{0}\backslash \Delta$ as graphs with weights.
\item
\label{dock2}
$\wgr_{\!1}(\ell,p)\cong\Gamma_1\backslash \Delta$ as graphs with weights.
\end{enumerate}
\end{thm}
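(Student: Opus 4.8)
The plan is to prove Theorem \ref{dock} by combining the adelic description of $\sS_1(p)_0$ from Theorem \ref{toad}(a) with the classical dictionary between the Bruhat--Tits tree for $\SL_2(\Q_\ell)$ and the lattices/ideals it parametrizes. Recall from Remark \ref{cow} that for $g=1$ the set $\msP_1(\cO) = \HH_p^\times\backslash \hat{\HH}_p^\times/\cO_{\hat{\HH}_p}^\times$ is the set of right $\cO$-ideal classes, and this is $\Ver(\gr_{\!1}(\ell,p)) = \sS_1(p)_0$. Since $\HH_p$ is split at $\ell$, writing $\widehat{\HH}_p = \HH_p\otimes\Q_\ell \times \prod_{q\neq\ell}'\HH_{p,q}$ and using that $\cO[1/\ell]$ has class number one locally away from $\ell$ (strong approximation for $\SL_1$, or a direct computation with $\cO[1/\ell]$ being a maximal order whose completions away from $\ell$ are all maximal with trivial class group contributions collapsed), one gets a bijection
\[
\msP_1(\cO) \;\longleftrightarrow\; \Gamma_0\backslash\bigl(\GL_2(\Q_\ell)/\GL_2(\Z_\ell)\bigr),
\]
and $\GL_2(\Q_\ell)/\GL_2(\Z_\ell)$ is exactly the vertex set of the tree $\Delta$ when one remembers that $\GL_2(\Z_\ell)\Q_\ell^\times$ is the stabilizer of a vertex. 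So the first step is to set up this bijection carefully and check it is $\Gamma_0$-equivariant, giving $\Ver(\gr_{\!1}(\ell,p)) \cong \Ver(\Gamma_0\backslash\Delta)$, with matching weights since on both sides the weight of a vertex is the order of the relevant unit/stabilizer group (this is where the identification $e(i) = \#\cO_i^\times$ from \S\ref{dog} matches the tree-stabilizer orders).

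The second step is to match edges. An edge of $\gr_{\!1}(\ell,p)$ out of $[\msA_i]$ is a class $[C]$ of a maximal isotropic (here just order-$\ell$, since $g=1$) subgroup $C\subseteq A_i[\ell]$ up to $\Aut(\msA_i)$; passing to the ideal-theoretic picture, quotienting by such a $C$ corresponds to passing from the right $\cO$-ideal $I_i$ to an overideal of index $\ell$, equivalently to moving to an adjacent vertex in $\Delta$ in the $\GL_2(\Q_\ell)$-local picture. Concretely, the $\ell+1$ order-$\ell$ subgroups of $A[\ell]$ biject with the $\ell+1$ neighbors of the corresponding vertex in the tree, and this biject is $\Gamma_0$-equivariant. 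So edges out of a vertex $v$ in $\gr_{\!1}(\ell,p)$ correspond to $\Gamma_{0,v}$-orbits on the $\ell+1$ neighbors of $v$ in $\Delta$, which are precisely the edges out of $[v]$ in $\Gamma_0\backslash\Delta$, again with matching weights (the weight of an edge being the order of the stabilizer of the edge, i.e.\ $\#\Aut(A,\lambda,C)$). That gives Theorem \ref{dock}(\ref{dock1}).

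For Theorem \ref{dock}(\ref{dock2}), the cleanest route is to use the structural results already in the excerpt rather than redo the analysis: by Theorem~\ref{halibut} and the theorem following it, $\wgr_{\!1}(\ell,p)$ is the bipartite double cover of $\gr_{\!1}(\ell,p)$ with inherited weights, and by Theorem~\ref{dream}(\ref{dream2}) (Kurihara) $\Gamma_1\backslash\Delta$ is the bipartite double cover of $\Gamma_0\backslash\Delta$ with inherited weights. Since a bipartite double cover is determined up to isomorphism (respecting weights) by the base graph with weights, part (\ref{dock1}) immediately gives $\wgr_{\!1}(\ell,p)\cong\Gamma_1\backslash\Delta$. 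Alternatively, and more in the spirit of a direct proof, one identifies $\Ver(\wgr_{\!1}(\ell,p)) = \sS_1(p)_0\coprod\sS_1(p)_g$ with $\Gamma_1\backslash(\GL_2(\Q_\ell)/\GL_2(\Z_\ell))$ by noting that $\Gamma_1 = \ker(\Nm\colon\Gamma_0\to\Z[1/\ell]^\times)$ and $\Nm$ surjects onto $\pm\ell^{\Z}$, so $\Gamma_0/\Gamma_1$ has order $2$ (up to the $\pm1$ ambiguity), and the two cosets correspond to vertices at even versus odd distance in $\Delta$, i.e.\ to the two sides of the bipartition — which matches the type-$0$ versus type-$g$ ($[\ell]$-polarization) dichotomy described before Definition~\ref{halibut}, with the $[\ell]$-dual corresponding to shifting by one step in $\Delta$.

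The main obstacle I anticipate is the bookkeeping in the first step: verifying that passing from $\msP_1(\cO)$ to the $\ell$-local double coset is clean requires knowing that $\cO[1/\ell]$ is a maximal order with trivial "away-from-$\ell$" class set, which is really a strong approximation statement for $\HH_p^{(1)}$ at the place $\ell$ (valid since $\HH_p$ is indefinite, i.e.\ split, at $\ell$ and $\HH_p^{(1)}$ is simply connected); one must also be careful with the center, since it is $\PGL_2$ rather than $\GL_2$ acting faithfully on the tree, so the $\Q_\ell^\times$-factors and the global units $\cO[1/\ell]^\times$ must be tracked so that the weights come out exactly as $\#\cO_i^\times$ and $\#\Aut(A,\lambda,C)$ and not off by a factor. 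Everything else — the $\ell+1$ neighbors $\leftrightarrow$ $\ell+1$ cyclic order-$\ell$ subgroups correspondence, and $\Gamma_0$-equivariance — is standard once the vertex identification is pinned down. Full details are as in \cite[\S 9]{jz}.
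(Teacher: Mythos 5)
The first thing to say is that this paper does not actually prove Theorem \ref{dock}: it states the result and defers the proof to \cite[\S 9]{jz}, and \S 5.2 makes clear that the intended derivation there is to prove the general-$g$ uniformization $\gr_{\!g}(\ell,p)=\GU_g(\cO[1/\ell])\backslash\cSS_{\!g}$, $\wgr_{\!g}(\ell,p)=\U_g(\cO[1/\ell])\backslash\cSS_{\!g}$ by strong approximation for the quaternionic unitary group and then specialize to $g=1$ (where $\cSS_1=\Delta_\ell$, $\GU_1(\cO[1/\ell])=\Gamma_0$, $\U_1(\cO[1/\ell])=\Gamma_1$). Your route is genuinely different in emphasis: you work directly at $g=1$ through the Deuring correspondence (Remark \ref{cow} and Theorem \ref{toad}(a)), identify $\msP_1(\cO)$ with $\Gamma_0$-orbits on vertices of the tree of lattices via strong approximation at $\ell$ for the norm-one group, match order-$\ell$ subgroups with neighbors in $\Delta$, and then obtain part (\ref{dock2}) for free by observing that both $\wgr_{\!1}(\ell,p)$ and $\Gamma_1\backslash\Delta$ are the canonical bipartite double covers of weighted graphs already identified in part (\ref{dock1}) (by the unnumbered theorem after Definition \ref{halibut} and by Kurihara's Theorem \ref{dream}(\ref{dream2}), respectively). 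That double-cover shortcut is a clean and legitimate use of results stated in the paper, and the $g=1$ argument is the classical one; what the paper's general-$g$ approach buys is a uniform statement for all $g$, at the cost of the heavier unitary-group machinery. Your outline is sound, and you correctly flag the two real points of bookkeeping (triviality of the away-from-$\ell$ class set of $\cO[1/\ell]$, and the center/weight normalization, where $\#\Aut(\msA)=\#\cO_i^\times$ must be reconciled with stabilizers in the image $\overline{\Gamma}_0\subseteq\PGL_2(\Q_\ell)$).

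Two small corrections. The cross-reference ``Theorem \ref{halibut}'' points at a definition; the bipartite double cover statement is the theorem immediately following it. More substantively, in your alternative argument for (\ref{dock2}) the claim that $\Nm$ surjects onto $\pm\ell^{\Z}$ and that $\Gamma_0/\Gamma_1$ has order $2$ is wrong as stated: since $\HH_p$ is definite, $\Nm(\Gamma_0)=\ell^{\Z}\subseteq\Q_{>0}^\times$, so $\Gamma_0/\Gamma_1\cong\Z$. The group that has order $2$ is the type-swapping quotient of $\overline{\Gamma}_0$ by the subgroup preserving the bipartition of $\Delta$, detected by $v_\ell(\Nm(\gamma))\bmod 2$; with that correction the identification of the two vertex classes with $\sSgpz$ and $\sSgpg$ goes through as you describe.
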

\noindent Note that 
the big isogeny graph $\Gr_{1}(\ell,p)$ is {\em not}
$\ell$-adically uniformized since, as we saw in Section 4.1, it is 
not even a graph with opposites.

Theorem \ref{dock} in turn will show that our isogeny graphs $\gr_{\!1}(\ell,p)$
and $\wgr_{\!1}(\ell,p)$ arise from the bad reduction of Shimura curves,
which we now explain. Let 
$B$ be the indefinite rational quaternion division algebra with
$\Disc B=\ell p$. Let 
$V_B/\Q$ be the Shimura curve parametrizing principally polarized abelian
surfaces with QM (quaternionic multiplication)
 by a maximal order $\mathcal{M}\subseteq B$.  There
is then a model $M_B/\Z$ of $V_B/\Q$ constructed as a coarse  moduli
scheme by Drinfeld \cite{d}; see also \cite{jl2}. Let 
$\msL/\Z_\ell$ be the $\ell$-adic upper half-plane.
The dual graph $G(\msL/\Z_\ell)$
of its
special fiber is canonically $\Delta=\Delta_\ell$.  For $\Gamma\subseteq
\PGL_2(\Q_\ell)$ a discrete, cocompact subgroup,
the quotient $\Gamma\backslash\msL$
is the formal completion of a scheme $\msL_\Gamma/\Z_\ell$
along its closed fiber. We have that $\msL_\Gamma/\Z_\ell$ is an
{\em admissible curve} in the sense of \cite[Defn.~3.1]{jl2}. Its  dual
graph $G(\msL_\Gamma/\Z_\ell)$ as in \cite[Defn.~3.2]{jl2} is a graph with
lengths and 
$G(\msL_\Gamma/\Z_\ell)
\simeq (\Gamma\backslash\Delta)^\ast$, see \cite[Prop.~3.2]{k}.

For the formulation below, see \cite{jl2}.
\begin{thm}[\v{C}erednik, Drinfeld]
\label{font}
Let $w_\ell$ be the Atkin-Lehner involution at $\ell$ of $M_B$.
Let $\overline{\Gamma}_0$ be the image of $\Gamma_0\subseteq \GL_2(\Q_\ell)$
in $\PGL_2(\Q_\ell)$ and similarly for $\overline{\Gamma}_1$.
Let $\mathfrak{O}$ be the ring of integers in the unramified quadratic
extension of $\Q_\ell$.
\begin{enumerate}[\upshape (a)]
\item
The scheme $M_B\times\Z_{\ell}$ is the twist of $\msL_{\overline{\Gamma}_1}/\Z_\ell$
given by the $1$-cocycle 
\begin{align*}
\chi\in H^1(\Gal(\mathfrak{O}/\Z_\ell)&,
\Aut(\msL_{\overline{\Gamma}_1}\times\mathfrak{O}/\mathfrak{O})),
\text{ where } \chi:\Frobb_\ell\mapsto w_\ell:\\
&\quad M_B\times \Z_\ell=(\msL_{\overline{\Gamma}_1})^\chi .
\end{align*}
\item
$(M_B/w_\ell)\times\Z_{\ell}=\msL_{\overline{\Gamma}_0}/\Z_\ell$.
\end{enumerate}
\end{thm}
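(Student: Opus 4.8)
The plan is to derive both statements from the Čerednik–Drinfeld $p$-adic uniformization theorem in the form already available in the literature (\cite{d}, \cite{jl2}), combined with Theorem \ref{dock} which identifies $\gr_{\!1}(\ell,p)$ and $\wgr_{\!1}(\ell,p)$ with $\Gamma_0\backslash\Delta$ and $\Gamma_1\backslash\Delta$ as graphs with weights. The key point is that the Shimura curve $V_B$ for the indefinite quaternion algebra $B$ with $\Disc B=\ell p$ has bad reduction precisely at $\ell$ and $p$, and at $\ell$ its formal completion is uniformized by the $\ell$-adic upper half-plane $\msL/\Z_\ell$ quotiented by the unit group of a \emph{definite} quaternion algebra obtained from $B$ by interchanging the invariants at $\ell$ and $\infty$. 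That definite algebra is exactly $\HH_p=\HH(p)$, ramified at $p$ and $\infty$, and the relevant arithmetic subgroup is $\Gamma_1=\{\gamma\in\cO[1/\ell]^\times\mid \Nm_{\HH(p)/\Q}(\gamma)=1\}$ acting on $\Delta=\Delta_\ell$ through its image $\overline{\Gamma}_1$ in $\PGL_2(\Q_\ell)$. So the skeleton of the argument is: (i) invoke Drinfeld's theorem to get $M_B\times\Z_\ell$ as a twisted form of $\msL_{\overline{\Gamma}_1}/\Z_\ell$; (ii) identify the twisting cocycle with the Atkin–Lehner involution $w_\ell$; (iii) pass to the quotient by $w_\ell$ to obtain part (b).

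For part (a), I would recall the precise statement of the Čerednik–Drinfeld uniformization as formulated in \cite[\S 3]{jl2}: over the maximal unramified extension, with ring of integers $\mathfrak{O}$, one has a canonical isomorphism $M_B\times\mathfrak{O}\cong \msL_{\overline{\Gamma}_1}\times\mathfrak{O}$ of formal/admissible curves. The descent data from $\mathfrak{O}$ down to $\Z_\ell$ differs from the ``obvious'' descent of $\msL_{\overline{\Gamma}_1}$ by the action of $\Frobb_\ell$, and the content of Drinfeld's theorem (as interpreted by Boutot–Carayol and in the form recorded in \cite{jl2}) is that this discrepancy is measured by the $1$-cocycle $\chi\in H^1(\Gal(\mathfrak{O}/\Z_\ell),\Aut(\msL_{\overline{\Gamma}_1}\times\mathfrak{O}/\mathfrak{O}))$ sending $\Frobb_\ell$ to the automorphism induced by $w_\ell$. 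Here one must check that $w_\ell$ does define an automorphism of $\msL_{\overline{\Gamma}_1}\times\mathfrak{O}$ over $\mathfrak{O}$: this follows because $w_\ell$ corresponds, on the uniformizing side, to conjugation by a norm-$\ell$ element of $\cO[1/\ell]^\times$ normalizing $\overline{\Gamma}_1$, i.e.\ to the element implementing the bipartite-double-cover deck transformation of $\Gamma_1\backslash\Delta$ over $\Gamma_0\backslash\Delta$ (cf.\ Theorem \ref{dream}\eqref{dream2} and Theorem \ref{dock}). Writing $(\msL_{\overline{\Gamma}_1})^\chi$ for the twist then gives exactly the asserted identity $M_B\times\Z_\ell=(\msL_{\overline{\Gamma}_1})^\chi$.

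For part (b), the quotient $M_B/w_\ell$ corresponds on the uniformizing side to enlarging $\overline{\Gamma}_1$ by the element realizing $w_\ell$, and that enlarged group is precisely $\overline{\Gamma}_0$ (the image of $\Gamma_0=\cO[1/\ell]^\times$ in $\PGL_2(\Q_\ell)$), since adjoining a norm-$\ell$ unit to the norm-one units generates the full unit group up to center. Moreover, once we have quotiented by $w_\ell$, the twisting cocycle becomes trivial — $w_\ell$ now acts as the identity on $M_B/w_\ell$ — so the twist disappears and we get an \emph{untwisted} identification $(M_B/w_\ell)\times\Z_\ell=\msL_{\overline{\Gamma}_0}/\Z_\ell$ over $\Z_\ell$ directly. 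I would justify the triviality of the descent here by noting that $\Frobb_\ell$ now acts on $\msL_{\overline{\Gamma}_0}\times\mathfrak{O}$ through the standard descent datum, because the extra factor $w_\ell$ has been absorbed into $\overline{\Gamma}_0$.

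The main obstacle I expect is step (ii): pinning down that the twisting automorphism is exactly the Atkin–Lehner involution $w_\ell$ and not some other normalizing element, and that the comparison of $\cO[1/\ell]$-structures on both sides is compatible with the $w_\ell$-action as moduli-theoretically defined. This is where one genuinely needs the moduli interpretation of $w_\ell$ on $M_B$ (it sends a QM abelian surface $(X,\iota)$ to its quotient by the kernel of a suitable two-sided ideal of norm $\ell$) together with the translation of that operation through Drinfeld's equivalence into the combinatorial deck transformation on $\Delta$. Everything else — the shape of the uniformizing group $\Gamma_1$, the identification of the definite quaternion algebra with $\HH_p$, and the passage to the quotient — is bookkeeping with strong approximation and the dictionary already set up in \S\ref{lion1} and in \cite{jl2}; I would cite \cite{d}, \cite{jl2} for the core uniformization statement and merely assemble the pieces, referring to \cite[\S 9]{jz} for the graph-theoretic counterpart via Theorem \ref{dock}.
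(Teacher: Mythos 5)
The paper gives no proof of this theorem: it is an attributed result of \v{C}erednik and Drinfeld, stated in the formulation of \cite{jl2} with only the citation ``For the formulation below, see \cite{jl2}.'' Your proposal is consistent with this --- it correctly reduces everything to the uniformization theorem of \cite{d} as recorded in \cite{jl2} plus bookkeeping with $\Gamma_0$, $\Gamma_1$, and $w_\ell$ --- so it takes essentially the same route the paper does, merely spelling out the assembly the paper leaves to the references.
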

\begin{crl}
\begin{enumerate}[\upshape (a)]
\item
$G(M_B\times \Z_\ell)=\Gamma_1\backslash \Delta=\wgr_{\!1}(\ell,p)$
as graphs with lengths.
\item
$G((M_B/w_\ell)\times\Z_\ell)=(\Gamma_{0}\backslash\Delta)^\ast=
\gr_{\!1}(\ell,p)^\ast$ as graphs with lengths 
with $(\Gamma_{0}\backslash\Delta)^\ast$,
$\gr_{\!1}(\ell,p)^\ast$ as in Definition \textup{\ref{roar}}.
\end{enumerate}
\end{crl}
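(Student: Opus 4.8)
The plan is to deduce the Corollary by combining the \v{C}erednik--Drinfeld Theorem~\ref{font} with the dictionary relating admissible curves to their dual graphs, and then with the identifications of $\Gamma_1\backslash\Delta$ and $\Gamma_0\backslash\Delta$ as isogeny graphs from Theorem~\ref{dock}. First I would recall that for a discrete cocompact $\Gamma\subseteq\PGL_2(\Q_\ell)$ the admissible curve $\msL_\Gamma/\Z_\ell$ has dual graph (with lengths) canonically isomorphic to $(\Gamma\backslash\Delta)^\ast$, as stated in the excerpt via \cite[Prop.~3.2]{k} and \cite[Defn.~3.2]{jl2}; this is the engine that turns the scheme-theoretic statements of Theorem~\ref{font} into graph statements.

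For part (a): by Theorem~\ref{font}(a), $M_B\times\Z_\ell$ is a twist of $\msL_{\overline{\Gamma}_1}/\Z_\ell$ by the cocycle $\chi:\Frobb_\ell\mapsto w_\ell$. The key point is that $w_\ell$ acts on $\msL_{\overline{\Gamma}_1}$ as the involution $\iota$ that swaps the two halves of the bipartite double cover $\Gamma_1\backslash\Delta$ of $\Gamma_0\backslash\Delta$ (Theorem~\ref{dream}(b)), and so on the dual graph it is a \emph{fixed-point-free} involution. A Galois twist by a cocycle valued in a group acting freely on the geometric special fiber does not change the dual graph over $\Z_\ell$ --- the special fiber components and their intersection pattern are permuted but the quotient graph is the same, because $\overline{\Gamma}_1\backslash\Delta$ is already the full quotient. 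Hence $G(M_B\times\Z_\ell)=\Gamma_1\backslash\Delta$ as graphs with lengths, and then Theorem~\ref{dock}(b) (together with the weights-to-lengths passage of Definition~\ref{roar}) identifies this with $\wgr_{\!1}(\ell,p)$. For part (b): Theorem~\ref{font}(b) gives $(M_B/w_\ell)\times\Z_\ell=\msL_{\overline{\Gamma}_0}/\Z_\ell$ directly, with no twisting involved; applying the dual-graph dictionary gives $G((M_B/w_\ell)\times\Z_\ell)=(\Gamma_0\backslash\Delta)^\ast$, and Theorem~\ref{dock}(a) with the $(-)^\ast$ operation of Definition~\ref{roar} identifies this with $\gr_{\!1}(\ell,p)^\ast$.

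The step I expect to be the main obstacle is justifying carefully that the Galois twist in part~(a) leaves the dual graph unchanged --- i.e., making precise that twisting an admissible curve by a cocycle whose image consists of automorphisms acting freely on the set of irreducible components of the special fiber produces an admissible curve with the \emph{same} dual graph with lengths (only the $\oFp$-structure on components changes, not the combinatorics). One has to be a little careful about whether $w_\ell$ could fix edges (half-edges) of $\Gamma_1\backslash\Delta$; but since $\Gamma_1\backslash\Delta$ is the bipartite double cover of $\Gamma_0\backslash\Delta$, the swap $\iota$ is genuinely fixed-point-free on both vertices and edges (as recorded for the analogous $\iota$ on $\wgr$ at the end of \S\ref{cat}), which is exactly what makes the twist harmless at the level of the dual graph. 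The remaining steps --- invoking Theorem~\ref{dock} and translating between ``graphs with weights'' and ``graphs with lengths'' via the canonical passage in Definition~\ref{roar} --- are formal.
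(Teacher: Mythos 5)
Your proposal is correct and follows exactly the route the paper intends: Theorem \ref{font} plus Kurihara's identification $G(\msL_\Gamma/\Z_\ell)\simeq(\Gamma\backslash\Delta)^\ast$ plus Theorem \ref{dock}, with the only point needing care being that the unramified twist in part (a) does not alter the dual graph (and that $\Gamma_1\backslash\Delta$, being bipartite, has no half-edges, so the $\ast$ is vacuous there). Note only that the freeness of $w_\ell$ on components is not actually needed for the twist step --- the dual graph depends only on the geometric special fiber, which an unramified twist leaves unchanged --- but this over-caution is harmless.
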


\subsection{$\ell$-adic uniformization of isogeny graphs for  $g>1$}

We would like to generalize Theorem \ref{dock} to $g\geq 1$.
Recall that $A=E^g$, $\cO=\End(E)$, and $\End(A)=\Mat_{g\times g}(\cO)$.
Let  $\cB_g$ be the Bruhat-Tits building for $\Sp_{2g}(\Q_\ell)$ with
$\cSS_{\!g}$ its special $1$-skeleton of $\cB_g$: the vertices
of $\cSS_{\!g}$ are the special vertices of $\cB_g$ and its edges
are the edges of $\cB_g$ between special vertices.

We prove the following theorem in \cite[\S 9]{jz}.

\begin{thm}
The groups $\GU_g(\cO[1/\ell])$, $\U_g(\cO[1/\ell])$
are as in Definition \textup{\ref{whale}}.
\begin{enumerate}[\upshape (a)]
\item
$\gr_{\!g}(\ell,p)=\GU_g(\cO[1/\ell])\backslash \cSS_{\!g}$ as graphs with weights.
\item
$\wgr_{\!g}(\ell,p)= \U_g(\cO[1/\ell])\backslash\cSS_{\!g}$ as graphs with weights.
\end{enumerate}
\end{thm}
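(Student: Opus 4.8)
The plan is to reduce the $g>1$ statement to the same adelic double-coset bookkeeping that, in the $g=1$ case, produced Theorem \ref{dock} via Kurihara's description of $\Gamma_0\backslash\Delta$ and $\Gamma_1\backslash\Delta$. The first step is to reinterpret the vertices of the isogeny graphs $\emph{\`a la}$ the tree picture. By Theorem \ref{toad}, $\sSgpz$ is in bijection with $\GU_g(\HH_p)\backslash\GU_g(\hat\HH_p)/\GU_g(\cO_{\hat\HH_p})$; since $\HH_p$ is split at $\ell$, one can factor the finite ad\`eles as a product over $\ell$ and over the primes away from $\ell$, so that after applying strong approximation for the quaternionic unitary group (the same ingredient cited for Theorem \ref{weg} and used in \cite[\S 2]{jz}) the class set becomes $\GU_g(\cO[1/\ell])\backslash \GU_g(\Q_\ell)/\GU_g(\cO_\ell)$, where $\cO_\ell=\cO\otimes\Z_\ell\cong\Mat_{2}(\Z_\ell)$-type data giving $\GU_g(\Q_\ell)\cong\GSp_{2g}(\Q_\ell)$ up to center. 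The special vertices of $\cB_g$ are exactly the cosets $\GSp_{2g}(\Q_\ell)/(\text{special maximal compact})$, and the two special-vertex classes under $\GSp$ (as opposed to $\Sp$) correspond precisely to principal polarizations versus $[\ell]$-polarizations of type $g$, i.e. to $\sSgpz$ versus $\sSgpg$. This matches $\Ver(\gr)$ with $\GU_g(\cO[1/\ell])$-orbits of one special-vertex type and $\Ver(\wgr)$ with $\U_g(\cO[1/\ell])$-orbits of all special vertices.

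The second step is the edges. An edge of $\cSS_g$ between special vertices corresponds, under the dictionary above, to a pair (lattice $L$, neighbor lattice $L'$) with $L'\subset L$, $L/L'$ totally isotropic of the maximal size $\ell^g$ allowed between adjacent special vertices — exactly the combinatorial shape of an element $C\in\Iso_\ell(\msA)$ of \eqref{whole} with $\msA/C$ again principally polarized. Thus $\GU_g(\cO[1/\ell])$-orbits (resp. $\U_g(\cO[1/\ell])$-orbits) of such edges are in bijection with the edges of $\gr$ (resp. $\wgr$) as defined in \S\ref{sunny} and Definition \ref{halibut}; the quotient by the $\cO[1/\ell]$-unit group is what collapses $C$ and $C'$ that differ by an element of $\Aut(\msA)$, since globally $\Aut(A,\lambda)$ is a subgroup of $\GU_g(\cO)\subseteq\GU_g(\cO[1/\ell])$. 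One then checks that the dual-isogeny involution on $\Ed(\gr)$ matches the edge-reversal involution on $\cSS_g$, and that the $[\ell]$-dual construction of \S\ref{halibut} matches the $\GSp$-outer action swapping the two special-vertex types; the diagram $\wgr/\iota=\gr$ then mirrors $\U_g\backslash\cSS_g \to \GU_g\backslash\cSS_g$.

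The third step is to carry the \emph{weights} through the identification. Here one must verify that $\w([\msA])=\#\Aut(\msA)$ equals the order of the stabilizer of the corresponding special vertex of $\cSS_g$ in $\GU_g(\cO[1/\ell])$ (resp. $\U_g(\cO[1/\ell])$), and similarly $\w([C])=\#\Aut(A,\lambda,C)$ equals the edge stabilizer order, with the divisibility $\w(e)\mid\w(o(e))$ automatic from stabilizer containment. This is the analogue of Kurihara's Theorem \ref{dream} for $g=1$, and it is exactly where the earlier bijections $\sSgpz\leftrightarrow\overline{\msH}_{g,1}(\cO)$ and the hermitian-module reformulation of \S\ref{frog} are needed to compute automorphism groups as unitary stabilizers. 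I expect this weight-matching to be the main obstacle: one has to pin down that the local stabilizer at $\ell$ contributes nothing new (the relevant compact is $\GU_g(\cO_\ell)$, a single special maximal compact up to conjugacy when $\ell$ is split, so it contributes trivially to orbit stabilizers after passing to $\cO[1/\ell]$) and that no additional identifications among edges appear beyond those recorded by $\sim$. Once the vertex–edge–weight dictionary is in place, parts (a) and (b) follow by reading off the two cases $\GU$ vs.\ $\U$, and one recovers as a consistency check that $\Adw(\gr_{\!g}(\ell,p))=B_g(\ell)$, matching the Brandt-matrix theorem of \S\ref{sunny}.
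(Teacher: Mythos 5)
The paper does not actually prove this theorem in the text: it says ``We prove the following theorem in \cite[\S 9]{jz}'' and defers entirely to that reference, so there is no in-paper argument to measure your write-up against. What can be said is that your outline follows precisely the strategy the paper signals: Theorem \ref{toad} realizes $\sSgpz$ as the double coset space $\GU_g(\HH_p)\backslash\GU_g(\hat{\HH}_p)/\GU_g(\cO_{\hat{\HH}_p})$, strong approximation for the quaternionic unitary group (named in the introduction as an ingredient of \cite{jz}) localizes everything at $\ell$, and Morita equivalence at the split place turns $\GU_g(\HH_p\otimes\Q_\ell)$ into ${\rm GSp}_{2g}(\Q_\ell)$ acting on $\cB_g$, with the two types of special vertices (self-dual versus $\ell$-modular lattices) matching $\sSgpz$ and $\sSgpg$. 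This is the direct generalization of the $g=1$ case (Theorems \ref{dream} and \ref{dock}), which the paper explicitly presents as the model, so your plan is the intended one.

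Two steps in your sketch are load-bearing and only asserted, not verified. First, part (a) requires $\GU_g(\cO[1/\ell])$ to act transitively on the two types of special vertices, i.e., to contain an element whose similitude factor has odd $\ell$-adic valuation (for instance $\alpha\Id_{g\times g}$ with $\Nm(\alpha)=\ell$); producing such an element needs Eichler's norm theorem together with strong approximation, and it is exactly what makes $\gr_{\!g}(\ell,p)$ have $h$ vertices while $\wgr_{\!g}(\ell,p)$ has $2h$. You never address why it exists. Second, the weight matching: you correctly isolate the claim that $\w([\msA])=\#\Aut(\msA)$ and $\w([C])=\#\Aut(A,\lambda,C)$ are the orders of the vertex and edge stabilizers in $\GU_g(\cO[1/\ell])$ (resp.\ $\U_g(\cO[1/\ell])$), but the assertion that the local factor at $\ell$ ``contributes nothing new'' is the crux; one must actually show that the stabilizer of a special vertex in $\GU_g(\cO[1/\ell])$ is the finite group $\Aut(\msA)$ computed integrally at all places, which uses definiteness of the hermitian form and the same lattice dictionary of Section \ref{frog}. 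Neither point appears to be wrong, but as written the argument is a plan rather than a proof, and these are the two places where the plan has to be executed rather than described.
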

In case $g=1$, $\Sp_2(\Q_\ell)=\SL_2(\Q_\ell)$,
$\cSS_{1}=\Delta_\ell$, $\U_1(\cO[1/\ell])=\Gamma_1$,
$\GU_1(\cO[1/\ell])=\Gamma_0$, and we recover Theorem \ref{dock}:
$\gr_{\!1}(\ell,p)=\Gamma_0\backslash \Delta_\ell$,
$\wgr_{\!1}(\ell,p)=\Gamma_1\backslash \Delta_\ell$.
As remarked in Section 1, there is great interest in
generalizing Theorem \ref{font} to $g>1$.


\end{document}